\documentclass{article}

\usepackage{arxiv}

\usepackage[utf8]{inputenc} 
\usepackage[T1]{fontenc}    
\usepackage{hyperref}       
\usepackage{url}            
\usepackage{booktabs}       
\usepackage{amsfonts}       
\usepackage{nicefrac}       
\usepackage{microtype}      
\usepackage{lipsum}
\usepackage{graphicx}
\usepackage{amsmath}
\usepackage{amssymb}
\usepackage{bbold}
\graphicspath{ {./images/} }
\usepackage{epsfig,rotating,amsmath,subfigure}

\title{Technical comments on the optimal control for linear distributed time-delay}

\author{
 Jorge-Manuel Ortega-Mart\'inez \\
  La Salle Morelia University\\
  Morelia, Michoac\'an \\
  \texttt{jmortegam@ulsamorelia.edu.mx} \\
   \And
 Omar-Jacobo Santos-Sánchez \\
  CITIS-ICBI\\
  Autonomous Hidalgo State University\\
  Carretera Pachuca-Tulancingo, \\
  Mineral de la Reforma, Hidalgo, México\\
  \texttt{omarj@uaeh.edu.mx} \\
  \And
   Liliam Rodr\'iguez-Guerrero \\
  CITIS-ICBI\\
  Autonomous Hidalgo State University\\
  Carretera Pachuca-Tulancingo, \\
  Mineral de la Reforma, Hidalgo, México\\
  \texttt{rodriguez@uaeh.edu.mx} \\
  \And
 Sabine Mondi\'e \\
  Department of Automatic Control\\
  CINVESTAV-IPN\\
  Ciudad de México, M\'exico \\
  \texttt{smondie@ctrl.cinvestav.mx} \\
}
\newtheorem{theorem}{Theorem}
\newtheorem{proof}{Proof}
\newtheorem{definition}{Definition}
\newtheorem{remark}{Remark}
\newtheorem{proposition}{Proposition}
\newtheorem{corollary}{Corollary}

\begin{document}
\maketitle
\begin{abstract}
The solution to the infinite horizon optimal control problem for linear distributed time-delay systems is presented. The proposal is based on the use of the Cauchy solution for distributed time-delay systems. 
In contrast with previous results (for punctual time delay systems), the form of the functional and its properties are formally justified.
An important property is demonstrated: the Bellman functional admits upper and lower bounds, thus it is proved to be positive definite. Additionally, some experimental results on a temperature process are presented.
\end{abstract}

\keywords{Bellman functional \and Lyapunov matrix \and Distributed time-delay systems \and Dynamic Programming \and  Optimal control}

\section{Introduction}
Sufficient stability conditions for the infinite horizon optimal control problem for linear time-delay systems were obtained  using  Dynamic Programming by Krasovskii \cite{Krasovskii_1962} in the early sixties. This was achieved through a guessed three terms Bellman functional. Starting from this functional, Ross \cite{Ross_1969} characterized the optimal control law for the above mentioned problem. There, the author mentions briefly that the choice for the Bellman functional can be obtained from Riesz approximations; this was validated in the real-time optimal control of a dehydration process \cite{Lopez_2018}.  In \cite{Kushner_1970} the Cauchy formula allowed the construction of the functional of Bellman for the optimal control of linear distributed time-delay systems. However, as the finite horizon control problem was considered, stability in the sense of Lyapunov was not addressed. In \cite{Santos_2009}, \cite{Santos_2006}, Dynamic Programming combined with prescribed derivative functionals lead to an iterative procedure for the design of suboptimal control laws for systems with distributed delays. At each step, the obtained control law decreases the value of the performance index and simultaneously guarantees closed-loop stability. 

In \cite{kim2000linear}, the authors studied the exact solutions for the Riccati equations associated with the optimal control problem for linear distributed time-delay systems considering a quadratic performance index with state cross terms. These terms allowed to verify the positiveness of the proposed functional and to solve the Riccati equations.  However, the construction of the Bellman functional was omitted and the authors mentioned that "there is no effective criterion to verify the positive definiteness of the Bellman functional for the distributed time-delay systems" when a quadratic performance index without state cross terms is considered. Please notice that the sufficient conditions for optimality in the Dynamic Programming approach imply the existence of a positive definite functional (Bellman functional) which satisfies the Bellman equation.  Recently \cite{Ortega2021}, the construction of the Bellman functional for the optimal control problem for concentrated state-delay systems was addressed, but the formal proof of the Bellman functional positive definiteness was not approached.   

It appears that, to the best of the author's knowledge, the infinite horizon optimal control problem for linear distributed state delay systems, when a quadratic performance index without state cross terms, is not given in the literature. Moreover, the demonstration of the positive definiteness of the Bellman functional for linear state-delay systems is not reported. 
 
To address these problems, we construct in this paper the Bellman functional for distributed time-delay systems by combining the ideas in \cite{KharitonovZhabko_2003} and \cite{Ortega2021}, considering a quadratic performance index without state cross terms. We also prove additional properties of the Bellman functional: its existence and uniqueness, and, inspired by the result in \cite{Wenzhang_1989}, we present a cubic lower bound for the Bellman functional that connects optimality and stability, and allows the conclusion that the Bellman functional is positive definite. The conditions for the existence of a solution of the obtained Riccati equations are presented in \cite{Ross_1969}; it is associated with the existence of a solution of a certain hyperbolic quasi-linear partial differential equation.  Also, the relation between the Bellman functional with approximations of it, given in previous results \cite{Santos_2009}, is presented. Additionally, unprecedented trajectory tracking experiments for a temperature plant are presented.

The contribution is organized as follows: the preliminaries and problem statement are given in
section 2. The main results of this research are presented in section 3: there, the Bellman functional is constructed with the help of the Cauchy formula and is expressed in terms of delay Lyapunov matrices. Its properties are proved, as well as its existence and uniqueness. The Bellman functional upper and lower bounds are given in section 4. An illustrative example is given in section 5. Experimental results are reported in Section 6, and the contribution ends with some concluding remarks.

We denote the space of $\mathbb{R}^{n}$-valued piece wise-continuous functions on $[-h, 0]$ by $\mathrm{PC}([-h, 0],\mathbb{R}^{n})$. For a given initial function $\varphi(\theta)$,  $x_{t}(\varphi)$ denotes the state of the delay system $\{ x(t+\theta , \varphi), \theta \in [-h,0]\}$, with delay $h>0$; when the initial condition is not crucial, the argument $\varphi$ is omitted. The Euclidian norm for vectors is represented by $\parallel \cdot \parallel$. The set of piecewise continuous functions is equipped with the norm $\parallel \varphi \parallel_{h}=\sup_{\theta\in [-h,0]}\parallel \varphi(\theta) \parallel$. The notations $Q>0$ indicates that matrix $Q$ is positive definite. By $\dot{V}(x_{t})\mid_{\substack{ (*)  \\ u=u^{*}}}$, we denote the time derivative of the functional $V(x_{t})$ along the trajectories of system (*), when the control law is $u^{*}$.

\section{Preliminaries and Problem Statement}
Consider the linear distributed time-delay systems of the form
\begin{equation}
\begin{split}
&\dot{x}(t)=Ax(t)+Bx(t-h)+\int_{-h}^{0}E(\theta )x(t+\theta )d\theta+Du(t),\\
&\varphi \in \mathrm{PC}([-h, 0],\mathbb{R}^{n})
\end{split}
\label{sistema1}
\end{equation}
where the matrices $A,B \in \mathbb{R}^{n \times n}$, $D\in \mathbb{R}^{n \times r}$ are constant, $E(\theta)\in \mathbb{R}^{n \times n} $ is a continuous matrix defined for $\theta \in [-h,0]$, the state $x(t)$ is in $\mathbb{D}$, the space of solutions which contains the trivial one, and the control vector $u(t)$ belongs to $\mathbb{R}^{r}$, $r \leq n$.\\
Let the following quadratic performance index be given:
\begin{equation}
\begin{split}
J&=\int_{0}^{\infty} g(x_{t},u(t)) dt
=\int_{0}^{\infty} \left( x^{T}(t)Qx(t) + u^{T}(t)Ru(t)
\right) dt,
\end{split}
\label{desempenioJ}
\end{equation}
with $Q \in \mathbb{R}^{n \times n}, R \in \mathbb{R}^{r \times r}$, $Q > 0,R>0$.\\
The optimal control problem consists in the synthesis of the optimal control $u^{*}(t)$ that minimizes the quadratic performance index (\ref{desempenioJ}) subject to \eqref{sistema1}. 

Admissible controls for this problem satisfy:
\begin{enumerate}
\item $u=u(x_{t})$, in other words, the control is a function of the system state, $x_{t}.$
\item The functional $u(x_{t})$\ is such that solutions to (\ref{sistema1}) exist and are unique for $t\geq 0$\ and for all initial conditions $\varphi $.
\item The trivial solution of (\ref{sistema1}) in closed-loop with control law $u=u(x_{t})$\
is asymptotically stable.
\item For $u=u(x_{t})$ and all initial conditions $\varphi $ the performance
index has a finite value. 
\end{enumerate}
System (\ref{sistema1}) in closed-loop with an admissible control of the form
\begin{equation}
u_{L}(x_t)=\Gamma_{0}x(t)+\int_{-h}^{0} \Gamma_{1}(\theta)x(t+\theta) d\theta.
\label{u_gammas}
\end{equation}
is an exponentially stable system of the form
\begin{equation}
\dot{x}(t)= A_{0}x(t)+ A_{1}x(t-h)+\int_{-h}^{0}  G(\theta)x(t+\theta) d\theta, t\geq 0.
\label{sistema_1_lazo_cerrado_u_gammas}
\end{equation}
where  $x(t) \in \mathbb{D}$; $A_{0}=A+D\Gamma_{0},A_{1}=B,G(\theta)=E(\theta )+D\Gamma _{1}(\theta ) \in \mathbb{R}^{n \times n}$. As the right-hand side of  system (\ref{sistema_1_lazo_cerrado_u_gammas}) is Lipschitz, for any initial condition $\varphi\in \mathrm{PC}([-h, 0],\mathbb{R}^{n})$, the solution exists and is unique.  Sufficient conditions guaranteeing the existence of the control law (\ref{u_gammas}) can be found, for example, in \cite{rodriguez2019robust}. 

\begin{remark}
Notice that in Kim \cite{kim2000linear}, the optimal control (\ref{u_gammas}) was presented, the performance index (with state cross terms) to be minimized was:
\begin{equation}
    \begin{split}
    J_{K} =&\int_{0}^{\infty }\left( x^{T}\left( t\right) \Phi _{0}x\left(
    t\right) +2x^{T}\left( t\right) \int_{-h}^{0}\Phi _{1}\left( s\right)
    x\left( t+s\right) ds\right. \\
    &\left. +\int_{-h}^{0}\int_{-h}^{0}x^{T}\left( t+s\right) \Phi _{2}\left(
    s,v\right) x\left( t+v\right) dsdv+u^{T}\left( t\right) Nu\left( t\right) \right) dt,
    \end{split}
    \label{indice_kim}
\end{equation}
where $\Phi _{0}$ is a constant symmetric $n\times n$ matrix, $\Phi
_{1}\left( s\right) $ is an $n\times n$ matrix with piecewise-continuous
elements in $\left[ -h,0\right] $, $\Phi _{2}\left( s,v\right) $ is an $%
n\times n$ matrix with piecewise-continuous elements in $\left[ -h,0\right]
\times \left[ -h,0\right] $, and $N$ is an $r\times r$ positive-definite
symmetric matrix. The state weight functional in (\ref{indice_kim}) was proposed (without a construction process) by the quadratic functional
\begin{equation}
    \begin{split}
    V_{K}\left( x_{t}\right) =& x^{T}\left( t\right) \Phi _{0}x\left( t\right)
    +2x^{T}\left( t\right) \int_{-h}^{0}\Phi _{1}\left( s\right) x\left(t+s\right) ds
    +\int_{-h}^{0}\int_{-h}^{0}x^{T}\left( t+s\right) \Phi_{2}\left( s,v\right) x\left( t+v\right) dsdv.
    \end{split}
    \label{funcional_kim}
\end{equation}
According to \cite{kim2000linear}, the performance index \eqref{indice_kim} was used instead of \eqref{desempenioJ} because the number of free parameters increases, and it allowed the conclusion that the positiveness of the functional and to solve the Riccati equations.  \eqref{funcional_kim}.
\end{remark}
 The optimal control problem for punctual time-delay systems was studied  in \cite{Krasovskii_1962,Ross_1969}. The obtained results were built upon the following sufficient optimality conditions, showcasing the celebrated Bellman equation, generalized to time-delay systems.
\begin{theorem} \rm{(Ross \cite{Ross_1969}).}\label{teorema_2_1}
	\textit{If there exists an admissible control $u^{*}=u^{*}(x_{t})$ and a scalar continuous non negative functional $V(x_{t})$, $V=0$ for all $x_{t}=0$, such that
	\begin{equation}
	\dot{V}(x_{t})\mid_{\substack{ (\ref{sistema1})  \\ u=u^{*}}} +
	g(x_{t},u^{*}(x_{t}))=0, \quad \forall t\geq 0,
	\label{inciso_a}
	\end{equation}
	\begin{equation}
	\begin{split}
	&\dot{V}(x_{t}) \mid_{\substack{ (\ref{sistema1})  \\ u=u^{*}}} +
	g(x_{t},u^{*}(x_{t}))
	\leq \dot{V}(x_{t}) \mid_{\substack{ (\ref{sistema1})
			\\ u=u_{L}(x_{t})}} + g(x_{t},u), \quad \forall t\geq 0,
	\end{split}
	\label{inciso_b}
	\end{equation}	
	for all admissible $u_{L}(x_{t})$, then $u^{*}(x_{t})$ is an optimal control. Furthermore $V(\varphi)=J(\varphi,u^{*})$ is the optimal value of  the performance index $J$.}
\end{theorem}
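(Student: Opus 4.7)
The argument follows the classical dynamic programming verification scheme, now adapted to the delay setting. Two independent integrations of the identities in the statement yield (i) the value $V(\varphi)=J(\varphi,u^{*})$ along the optimal policy, and (ii) the inequality $V(\varphi)\leq J(\varphi,u_{L})$ for every competing admissible policy; the required limits are enabled by the asymptotic stability that is built into the admissibility hypothesis.

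First, I would fix $\varphi\in \mathrm{PC}([-h,0],\mathbb{R}^{n})$ and denote by $x_{t}^{*}$ the closed-loop trajectory produced by $u^{*}$. Integrating the Bellman equation (\ref{inciso_a}) on $[0,T]$ yields
\begin{equation*}
V(x_{T}^{*}) - V(\varphi) = -\int_{0}^{T} g(x_{t}^{*},u^{*}(x_{t}^{*}))\, dt.
\end{equation*}
Since $u^{*}$ is admissible, the closed loop is asymptotically stable, so $\|x_{t}^{*}\|_{h}\to 0$ as $t\to\infty$. The continuity of $V$ together with $V=0$ at the zero state then gives $V(x_{T}^{*})\to 0$, and the finiteness of the performance index (admissibility item 4) permits letting $T\to\infty$, producing $V(\varphi)=\int_{0}^{\infty} g(x_{t}^{*},u^{*}(x_{t}^{*}))\,dt=J(\varphi,u^{*})$.

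Next, for an arbitrary admissible $u_{L}$ with trajectory $x_{t}^{L}$, combining (\ref{inciso_a}) with (\ref{inciso_b}) evaluated at the state $x_{t}^{L}$ gives the pointwise inequality $\dot V(x_{t}^{L})\mid_{u=u_{L}} + g(x_{t}^{L},u_{L}(x_{t}^{L})) \geq 0$ for all $t \geq 0$. Integrating on $[0,T]$ and sending $T\to\infty$, using once more the asymptotic stability of the $u_{L}$-closed-loop system and $V(0)=0$, produces $V(\varphi)\leq J(\varphi,u_{L})$. Comparing the two displays gives $J(\varphi,u^{*}) = V(\varphi) \leq J(\varphi,u_{L})$ for every admissible $u_{L}$, which is exactly the claim that $u^{*}$ is optimal and that $V(\varphi)$ equals the optimal cost.

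The technical point that I expect to require the most care is justifying the passage $V(x_{T})\to 0$ as $T\to\infty$ in both integrations. This requires not merely asymptotic stability of the closed loop but continuity of the functional $V$ in the norm $\|\cdot\|_{h}$, so that $\|x_{t}\|_{h}\to 0$ translates into $V(x_{t})\to V(0)=0$. Because $V$ is only postulated to be a scalar continuous non-negative functional vanishing at the zero state, this step uses precisely the full strength of the hypotheses and should be spelled out rather than taken for granted; once it is in place, the remainder of the verification is purely algebraic.
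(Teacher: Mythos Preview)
The paper does not supply its own proof of this theorem: it is quoted as a result of Ross \cite{Ross_1969} and used as a tool, with no argument given in the present text. Your proposal is the standard dynamic-programming verification argument (integrate \eqref{inciso_a} along the $u^{*}$-trajectory to get $V(\varphi)=J(\varphi,u^{*})$; combine \eqref{inciso_a} and \eqref{inciso_b} along an arbitrary admissible trajectory to get $V(\varphi)\le J(\varphi,u_{L})$), and it is correct; this is precisely the route taken in Ross's original paper, so there is nothing to contrast.

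One minor remark on the second integration: once you have $V(\varphi)\le V(x_{T}^{L})+\int_{0}^{T}g\,dt$, the non-negativity of $V$ alone is not enough to pass to the limit; you do need $V(x_{T}^{L})\to 0$, exactly as you flagged. Your identification of continuity of $V$ in $\|\cdot\|_{h}$ plus asymptotic stability as the mechanism is the right one and matches the hypotheses of the statement.
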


The last result could be applied to the optimal control problem for distributed time-delay systems. The functional $V(x_{t})$ is called Bellman functional, notice that the requirement on the non negativity of the Bellman functional is instrumental in proving the necessary and sufficient conditions for an optimal control for time-delay systems. These conditions are given in the result below, which displays the analogue of the Riccati equation, for linear distributed time delay systems. 
\begin{theorem}\label{teorema_2_2}
	\textit{A linear control law
	\begin{equation}
	\begin{split}
	&u^{*}(t)=-R^{-1}D^{T} \Pi_{0} x(t)
	-R^{-1}D^{T}\int_{-h}^{0}
	\Pi_{1}(\theta)x(t+ \theta) d\theta, \quad t\geq 0,
	\end{split}
	\label{ley_control_u0}
	\end{equation}
	provides the global minimum of the performance index \eqref{desempenioJ} for the dynamical system \eqref{sistema1} if:}
	\begin{itemize}
	    \item[a)] \textit{$u^{*}(x_{t})$ is a stabilizing control law (since $u^{*}$ is linear, stability and admissibility are equivalent).}
		\item[b)] \textit{$\Pi_{0}$ is a symmetric positive definite matrix which, together with the $n \times n$ array $\Pi_{1}(\theta)$ of functions defined on $[-h,0]$, and the $n\times n$ array, $\Pi_{2}(\xi,\theta)$ of functions in two variables having domain $\xi, \theta \in [-h,0]$, satisfies the relations:}
	\end{itemize}
	\begin{itemize}
		\item[1)] $A^{T}\Pi _{0}+\Pi _{0}A-\Pi _{0}DR^{-1}D^{T}\Pi _{0}+\Pi _{1}^{T}\left(0\right) +\Pi _{1}\left( 0\right) +Q=0,$
		\item[2)] $\frac{d\Pi _{1}\left( \theta \right) }{d\theta }=\left( A^{T}-\Pi_{0}DR^{-1}D^{T}\right) \Pi _{1}\left( \theta \right) +\Pi _{2}\left(0,\theta \right) +\Pi _{0}E\left( \theta \right) ,\text{ \ }\theta \in \left[-h,0\right],$
		\item[3)] $\frac{\partial \Pi _{2}\left( \xi ,\theta \right) }{\partial \xi }+\frac{\partial \Pi _{2}\left( \xi ,\theta \right) }{\partial \theta }=-\Pi_{1}^{T}\left( \xi \right) DR^{-1}D^{T}\Pi _{1}\left( \theta \right)+2E^{T}\left( \xi \right) \Pi _{1}\left( \theta \right) , \quad \xi,\theta \in \left[ -h,0\right],$
		\item[4)] $\Pi _{1}\left( -h\right) =\Pi _{0}B,$
		\item[5)] $\Pi _{2}\left( -h,\theta \right) =B^{T}\Pi _{1}\left( \theta \right) ,\text{
\ }\theta \in \left[ -h,0\right].$
		\begin{equation}
		\label{SetPartialEquations}
		\end{equation}
	\end{itemize}
	\textit{Furthermore, the representation of \eqref{desempenioJ} in
	terms of the initial function is}
	\begin{equation}
	\begin{split}
	J(\varphi,u^{*})=&\varphi^{T}(0) \Pi_{0} \varphi(0) + 2 \varphi^{T}(0)
	\int_{-h}^{0} \Pi_{1}(\theta) \varphi (\theta) d \theta
	+ \int_{-h}^{0}	\int_{-h}^{0} \varphi^{T}(\xi) \Pi_{2} (\xi,\theta) \varphi(\theta) d \xi d
	\theta.
	\label{indice_desempenio_J_bajo_control_u0}
	\end{split}
	\end{equation} 
\end{theorem}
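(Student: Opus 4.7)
The plan is to verify the sufficient optimality conditions \eqref{inciso_a}--\eqref{inciso_b} of Theorem~\ref{teorema_2_1} with the candidate Bellman functional suggested by the formula for $J(\varphi,u^{*})$, namely
\begin{equation*}
V(x_{t}) = x^{T}(t)\Pi_{0}x(t) + 2x^{T}(t)\int_{-h}^{0}\Pi_{1}(\theta)x(t+\theta)\,d\theta + \int_{-h}^{0}\!\!\int_{-h}^{0} x^{T}(t+\xi)\Pi_{2}(\xi,\theta)x(t+\theta)\,d\xi\,d\theta,
\end{equation*}
and then read off the optimal law by completing the square in the control. Admissibility of $u^{*}$ is assumed in item~a), so once \eqref{inciso_a}--\eqref{inciso_b} are established, Theorem~\ref{teorema_2_1} delivers optimality and the formula \eqref{indice_desempenio_J_bajo_control_u0}.

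First I would compute $\dot{V}(x_{t})$ along the trajectories of \eqref{sistema1} driven by an arbitrary admissible control $u$. The term $\frac{d}{dt}[x^{T}\Pi_{0}x] = 2x^{T}\Pi_{0}\dot{x}$ is direct from \eqref{sistema1}. In the cross term, the factor $\frac{d}{dt}\int_{-h}^{0}\Pi_{1}(\theta)x(t+\theta)\,d\theta$ is handled by replacing $\frac{d}{dt}x(t+\theta)$ by $\frac{d}{d\theta}x(t+\theta)$ and integrating by parts, which produces the boundary contributions $\Pi_{1}(0)x(t)-\Pi_{1}(-h)x(t-h)$ and the interior term $-\int_{-h}^{0}\Pi_{1}'(\theta)x(t+\theta)\,d\theta$. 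The same device, applied in both variables, reduces $\frac{d}{dt}$ of the double integral to boundary traces at $\xi=0,-h$ and $\theta=0,-h$ plus the interior integral of $-\bigl(\partial_{\xi}\Pi_{2}+\partial_{\theta}\Pi_{2}\bigr)$. Substituting $\dot{x}$ from \eqref{sistema1} in the remaining point terms spawns contributions proportional to $x(t)$, $x(t-h)$, $\int E(\theta)x(t+\theta)\,d\theta$ and $Du$.

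Next I would collect the result as a sum of a pointwise quadratic form in $x(t)$, a scalar $x^{T}(t-h)[\,\cdot\,]x(t)$ cross term, single integrals indexed by one of $\xi$ or $\theta$, and a double integral. The boundary conditions~4)--5) are designed exactly so that the coefficients of the terms containing $x(t-h)$ cancel; conditions~2) and~3) cancel the coefficients of the single and double integrals, respectively; condition~1) cancels the $x^{T}(t)[\cdot]x(t)$ coefficient after one writes $-\Pi_{0}DR^{-1}D^{T}\Pi_{0}$ as arising from completing the square. The upshot is the identity
\begin{equation*}
\dot{V}(x_{t}) + g(x_{t},u) = \bigl(u - u^{*}(x_{t})\bigr)^{T} R \bigl(u - u^{*}(x_{t})\bigr),
\end{equation*}
where $u^{*}(x_{t})$ is precisely the feedback \eqref{ley_control_u0}. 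Since $R>0$, this identity gives \eqref{inciso_b} for every admissible $u$ and, taking $u=u^{*}$, the Bellman equation \eqref{inciso_a}. Invoking Theorem~\ref{teorema_2_1}, $u^{*}$ is optimal and $V(\varphi)=J(\varphi,u^{*})$, which yields \eqref{indice_desempenio_J_bajo_control_u0}.

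The main obstacle is bookkeeping: the integration-by-parts step in the double integral, where the four boundary traces must be paired correctly with conditions~4)--5) after using the symmetry $\Pi_{2}(\xi,\theta)=\Pi_{2}^{T}(\theta,\xi)$, and the rearrangement that places $\Pi_{1}^{T}(\xi)DR^{-1}D^{T}\Pi_{1}(\theta)$ and $2E^{T}(\xi)\Pi_{1}(\theta)$ on the correct side of condition~3) once the $Du$ term produced by $2x^{T}\Pi_{0}Du$ and the analogous cross contribution from the $\Pi_{1}$-term are split into $u^{*}$ plus the quadratic remainder in $u-u^{*}$. Non-negativity of $V$ required by Theorem~\ref{teorema_2_1} is not proved here; it is handled separately in section~4 via the lower bound announced in the introduction.
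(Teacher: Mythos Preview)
Your proposal is correct and follows essentially the same path as the paper's proof: compute $\dot V(x_t)$ along \eqref{sistema1} via integration by parts in $\theta$ and $\xi$, collect the resulting point, single-integral and double-integral terms, and use conditions 1)--5) so that the Hamilton--Jacobi--Bellman identity holds. The only cosmetic difference is that the paper first isolates $u^{*}$ from the stationarity condition $\partial H/\partial u=0$ (and checks $\partial^{2}H/\partial u^{2}=2R>0$), then substitutes $u^{*}$ back into \eqref{inciso_a} to read off 1)--5), whereas you complete the square in $u$ directly to obtain $\dot V(x_t)+g(x_t,u)=(u-u^{*})^{T}R(u-u^{*})$, which delivers \eqref{inciso_a} and \eqref{inciso_b} in one stroke; the two routes are equivalent.
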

The proof of the above result is given later. It consists roughly to propose an admissible control candidate for the optimal control, the construction of the Bellman functional associated with this candidate optimal control, after replacing the derivative of the Bellman functional and solving \eqref{inciso_b}. Finally, the Riccati equations are obtained with the optimal control found through the minimization process.  In the paper by Ross \cite{Ross_1969}, the form of the Bellman functional, for punctual time-delay systems, is introduced in his Proposition 3. It is followed by a brief discussion that outlines that the essence of the proposal is the use of Riesz approximations. However, we consider that the construction step of the Bellman functional and to prove its positiveness is a crucial stage. Here, a constructive proof for the Bellman functional is given. The following proposition (the proof is given later) establishes the form of the Bellman functional for the optimal control problem for distributed time-delay systems.

\begin{proposition} \label{proposicion_2_1}
	If $u_{L}=u_{L}(x_{t})$, $\forall t\geq 0$, is an admissible linear control for the system given by \eqref{sistema1}, $\varphi$ is an
	initial condition functional on $[-h,0]$,  then the function
	\begin{equation}\label{Vpositiva}
	V(\varphi) = J(\varphi, u_{L})= \int_{0}^{\infty}
	(x^{T}(t)Qx(t)+u_{L}(t)^{T}Ru_{L}(t)) dt,
	\end{equation}%
	can be expressed as
	\begin{equation}
	\begin{split}
	&V(\varphi) = \varphi^{T}(0) \Pi_{0} \varphi(0) +2 \varphi^{T}(0) \int_{-h}^{0}
	\Pi_{1}(\theta) \varphi (\theta) d \theta
	+ \int_{-h}^{0} \int_{-h}^{0} \varphi^{T}(\xi) \Pi_{2} (\xi,\theta)
	\varphi(\theta) d \xi d \theta,
	\end{split}
	\label{funcional_v_phi_11}
	\end{equation}
	where
	\begin{itemize}
		\item[i)] $\Pi_{0}>0$ is a symmetric positive matrix.
		\item[ii)] $\Pi_{1}(\theta)$ is defined on $[-h,0]$.	
		\item[iii)] $\Pi_{2}(\xi,\theta)$ is defined on $%
		\xi, \theta \in [-h,0], \quad \Pi_{2}^{T}(\xi, \theta)=\Pi_{2}(\theta,\xi)$.
	\end{itemize}
\end{proposition}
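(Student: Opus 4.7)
The plan is to exploit the fact that, under the admissible linear control $u_L$, the closed-loop system \eqref{sistema_1_lazo_cerrado_u_gammas} is linear, autonomous, and exponentially stable, so its trajectory is a linear functional of the initial condition $\varphi$ and the integrand in \eqref{Vpositiva} is a quadratic functional of $x_t$. Composing these two facts forces $V(\varphi)$ to be a quadratic form in $\varphi$ of the type \eqref{funcional_v_phi_11}.

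First, I would substitute \eqref{u_gammas} into $u_L^T R u_L$ and expand the square, obtaining three contributions: $x^T(t)\Gamma_0^T R \Gamma_0 x(t)$, a cross term $2 x^T(t) \Gamma_0^T R \int_{-h}^0 \Gamma_1(\theta) x(t+\theta) d\theta$, and a double-integral term $\int_{-h}^0 \int_{-h}^0 x^T(t+\xi) \Gamma_1^T(\xi) R \Gamma_1(\theta) x(t+\theta) d\xi d\theta$. Adding $x^T(t)Qx(t)$, the running cost becomes a quadratic functional $w(x_t)$ with a fixed kernel built from $Q$, $R$, $\Gamma_0$, $\Gamma_1$.

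Next, I would invoke the Cauchy formula for \eqref{sistema_1_lazo_cerrado_u_gammas}, which expresses $x(t;\varphi)$ as a linear map in $\varphi$ through the fundamental matrix $K(t)$ of the closed-loop system, with contributions coming from $\varphi(0)$, from $A_{1}\varphi(\theta)$ on $[-h,0]$, and from $G(\cdot)\varphi$. Substituting this expression into $w(x_t)$ and integrating over $t \in [0,\infty)$, I would use Fubini's theorem --- whose hypotheses are met thanks to the exponential decay of $K(t)$ --- to interchange the time integral with the spatial integrals in $\xi$ and $\theta$, and then apply changes of variable so that the resulting kernels depend only on the delay arguments. Grouping the resulting terms by how many factors of $\varphi$ live on $[-h,0)$ versus at $0$, I would obtain exactly three kinds of contributions, matching the structure \eqref{funcional_v_phi_11}, and identify $\Pi_0$, $\Pi_1(\theta)$, $\Pi_2(\xi,\theta)$ as explicit (convergent) improper integrals of products involving $K(\cdot)$ with the kernel of $w$.

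For the stated properties, the symmetry $\Pi_2^T(\xi,\theta) = \Pi_2(\theta,\xi)$ follows from the symmetry of the bilinear form generated by the double integral in $V(\varphi)$, which can be symmetrized without loss of generality. For $\Pi_0>0$, I would evaluate $V$ at the probing initial function $\varphi(\theta)=0$ on $[-h,0)$ with $\varphi(0)=x_0\neq 0$: the two integral terms of \eqref{funcional_v_phi_11} vanish, leaving $V(\varphi)=x_0^T \Pi_0 x_0$, which must be strictly positive because $Q>0$ makes the instantaneous cost strictly positive immediately after $t=0$, and hence the integral \eqref{Vpositiva} is strictly positive. The main obstacle will be the bookkeeping in the third step: the interaction of the distributed delay in the system with the distributed kernel of $w$ produces several nested regions of integration, and reducing them to the clean three-term form requires consistent changes of variable while keeping track of the support $K(\tau)=0$ for $\tau<0$.
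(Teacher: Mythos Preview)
Your plan matches the paper's proof in its backbone: close the loop with \eqref{u_gammas}, expand the running cost into the quadratic functional $w(x_t)$ with kernels $M_1=Q+\Gamma_0^TR\Gamma_0$, $M_2(\theta)=\Gamma_0^TR\Gamma_1(\theta)$, $M_3(\theta_1,\theta_2)=\Gamma_1^T(\theta_1)R\Gamma_1(\theta_2)$, insert the Cauchy formula \eqref{solucion_cauchy} (which the paper abbreviates via $\hat K(t,\theta)$ in \eqref{k_gorro}), and use the exponential decay of $K$ to justify the improper integrals and the interchange of integration order. The paper then reads off the explicit expressions \eqref{pi0barra}--\eqref{pi2barra}, exactly the ``explicit convergent improper integrals'' you describe.

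Where you diverge is in the verification of (i) and (iii). For $\Pi_0>0$ the paper rewrites \eqref{pi0barra} as $\int_0^\infty \Psi^T(t)\,\mathcal{M}\,\Psi(t)\,dt$ with $\Psi^T(t)=\bigl[K^T(t),\ \int_{-h}^0 K^T(t+\theta)\Gamma_1^T(\theta)\,d\theta\bigr]$ and checks $\mathcal{M}>0$ by a Schur complement; your probing argument with $\varphi|_{[-h,0)}\equiv 0$, $\varphi(0)=x_0$ is more elementary and equally valid, since it collapses \eqref{funcional_v_phi_11} to $x_0^T\Pi_0 x_0$ while the Cauchy formula gives $x(t)=K(t)x_0$ and hence $V(\varphi)\geq\int_0^\infty x_0^TK^T(t)QK(t)x_0\,dt>0$ because $K(0)=I$. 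For (iii) the paper verifies $\Pi_2^T(\xi,\theta)=\Pi_2(\theta,\xi)$ by a lengthy direct transposition of \eqref{pi2barra}; your symmetrization shortcut certainly proves the proposition as stated, but be aware that the explicit $\Pi_2$ of \eqref{pi2barra} is the one that later feeds into the Riccati-type relations \eqref{SetPartialEquations} and into Proposition~\ref{escrituramatlyap}, so showing that this particular kernel is already symmetric (rather than replacing it by its symmetric part) is what the paper actually needs downstream.
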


\textbf{Problem Statement.}
 The main purpose of this contribution is to solve the infinite horizon optimal control problem for distributed time-delay linear systems. To solve this problem, we construct the Bellman functional given by 	\eqref{funcional_v_phi_11}, with matrix $\Pi_{0}$, and matrix functions $\Pi_{1}(\cdot)$,  $\Pi_{2}(\cdot,\cdot)$ satisfying conditions \textit{i)}, \textit{ii)}, and \textit{iii)}, respectively.  We also prove a very important property of the Bellman functional: that it is positive definite.
 
 The key element for addressing this challenge is the delay Lyapunov matrix function for stable linear systems of the form \eqref{sistema_1_lazo_cerrado_u_gammas} introduced in \cite{Santos_2009}.  It is the analogue of the classical Lyapunov matrix for linear delay free systems, and is similar to the delay Lyapunov matrix for pointwise time-delay system presented in \cite{Kharitonov_2006}:

\begin{definition}\rm{(Santos et al. \cite{Santos_2009}).}\label{def_matris_lyapunov}
\textit{Let system \eqref{sistema_1_lazo_cerrado_u_gammas} be exponentially stable. Given a matrix $M$, the Lyapunov matrix function is defined as
\begin{equation}
U(\tau,M) = \int_{0}^{\infty} K^{T}(t)MK(t+\tau)dt,
\label{matriz_U_0_tau_M}
\end{equation}
where $K(t)$, the fundamental matrix of system \eqref{sistema_1_lazo_cerrado_u_gammas}, is such that
\begin{equation*}
\dot{K}(t)= K(t)A_{0}+K(t-h)A_{1}+\int^{0}_{-h}K(t+\theta) G(\theta) d\theta, \quad t\geq 0,
\end{equation*}
with initial condition $K(0)=I$, and $K(\theta)=0$ for $\theta \in [-h,0]$.}
\end{definition}
The following properties of matrix \eqref{matriz_U_0_tau_M} hold:
\begin{theorem} \rm{(Santos et al. \cite{Santos_2009}).}\label{teorema_santos}
	\textit{The Lyapunov matrix \eqref{matriz_U_0_tau_M} satisfies the conditions
		\begin{equation}
		\begin{split}
		\frac{d}{d\tau}U(\tau,M)=&U(\tau,M)A_{0}+U(\tau-h,M)A_{1} 
		+ \int_{-h}^{0}U(\tau+\theta,M)G(\theta+\tau)d\theta, \quad \tau \geq 0,
		\end{split}
		\label{dinamica_U}
		\end{equation}
		\begin{equation}
		U(-\tau,M) = U^{T}(\tau,M^{T}),
		\label{simetrica_U}
		\end{equation}
		\begin{equation}
		-M= U'(+0,M) - U'(-0,M).
		\label{algebraica_U}
		\end{equation}
	}
\end{theorem}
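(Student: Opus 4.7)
My plan is to derive the three properties in order from the integral definition $U(\tau,M)=\int_0^\infty K^T(t) M K(t+\tau)\,dt$, using the fundamental matrix ODE in Definition~\ref{def_matris_lyapunov} together with its initial data $K(0)=I$ and $K(\theta)=0$ on $[-h,0)$. Exponential stability of the closed-loop system \eqref{sistema_1_lazo_cerrado_u_gammas} makes $\|K(t)\|$ decay exponentially, which justifies absolute convergence and differentiation under the integral sign in what follows.

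For the dynamical equation \eqref{dinamica_U}, I would differentiate $U(\tau,M)$ with respect to $\tau$ under the integral sign. For $\tau\geq 0$ the shifted argument $t+\tau$ is nonnegative, so $K(t+\tau)$ obeys the ODE of Definition~\ref{def_matris_lyapunov}. Substituting it splits the derivative into three pieces which, after swapping the order of integration in the distributed term and possibly a change of variable in $\theta$, are read off as $U(\tau,M) A_0$, $U(\tau-h,M) A_1$, and the convolution-type integral $\int_{-h}^{0} U(\tau+\theta,M)\,G(\cdot)\,d\theta$ matching the right-hand side of the claimed identity.

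For the symmetry \eqref{simetrica_U}, the plan is a simple change of variable. Taking $\tau\geq 0$, write $U(-\tau,M)=\int_0^\infty K^T(t) M K(t-\tau)\,dt$, substitute $s=t-\tau$, and then use $K(s)=0$ for $s\in[-h,0)$ to truncate the integration range back to $[0,\infty)$. Transposing the scalar expression inside the integral then yields $U(-\tau,M)=U^T(\tau,M^T)$.

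The algebraic identity \eqref{algebraica_U} is where the main subtlety lies, since it encodes the discontinuity of $K$ at $t=0$. I would compute the two one-sided derivatives separately from the integral representation. For $\tau>0$ the derivative passes inside the integral with no boundary contribution, giving $U'(+0,M)=\int_0^\infty K^T(t) M \dot K(t)\,dt$. For $\tau<0$ the integrand vanishes for $t<-\tau$, so the effective lower limit is $t=-\tau$, and Leibniz's rule produces an extra boundary term
\begin{equation*}
K^T(-\tau)\, M\, K(0^{+}) \;=\; K^T(-\tau)\, M,
\end{equation*}
which tends to $M$ as $\tau\uparrow 0$. Subtracting the two limits cancels the common interior integral and leaves $U'(+0,M)-U'(-0,M)=-M$. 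The main obstacle in making this rigorous is precisely the careful bookkeeping of the jump of $K$ at the origin, so that the Leibniz boundary term is unambiguously identified as the source of $-M$.
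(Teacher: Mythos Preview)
The paper does not supply a proof of this theorem: it is quoted from \cite{Santos_2009} as a preliminary result, so there is no in-paper argument to compare against. Your outline is the standard derivation and is essentially correct. Differentiating under the integral and using the fundamental-matrix ODE gives the dynamic property; the change of variable $s=t-\tau$ together with $K(s)=0$ for $s<0$ gives the symmetry; and the Leibniz boundary term coming from the effective lower limit $t=-\tau$ when $\tau<0$ produces exactly the jump $-M$ in the one-sided derivatives.

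Two small remarks. First, in the distributed term your computation actually yields
\[
\int_{-h}^{0} U(\tau+\theta,M)\,G(\theta)\,d\theta,
\]
with $G(\theta)$ rather than the $G(\theta+\tau)$ printed in \eqref{dinamica_U}; the latter is not even well defined for $\theta+\tau\notin[-h,0]$, so you should flag it as a typo in the stated formula rather than try to force a change of variable to match it. Second, in the algebraic identity you should make explicit that $K$ is extended by zero for all $t<0$ (not only on $[-h,0)$), so that for any $\tau<0$ the lower limit can legitimately be taken as $-\tau$ and the Leibniz boundary evaluation $K(0^{+})=I$, $K^{T}(-\tau)\to K^{T}(0^{+})=I$ is unambiguous.
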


A delay Lyapunov matrix property of major significance in the development of our main results  is the following.
\begin{theorem} \rm{(Kharitonov \cite{Kharitonov_2006}).}\label{exisuni}
\textit{Let system \eqref{sistema_1_lazo_cerrado_u_gammas} be exponentially stable, then matrix
\eqref{matriz_U_0_tau_M} is the unique solution of matrix equations \eqref{dinamica_U}-\eqref{algebraica_U}}.
\end{theorem}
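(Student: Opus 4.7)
The plan is to separate existence from uniqueness. Existence is already implicit in Theorem \ref{teorema_santos}: exponential stability of \eqref{sistema_1_lazo_cerrado_u_gammas} makes the improper integral in \eqref{matriz_U_0_tau_M} absolutely convergent, and the three identities \eqref{dinamica_U}--\eqref{algebraica_U} were verified there by differentiating under the integral and using the defining equation of the fundamental matrix $K(t)$. So the substance of the proof is uniqueness.

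For uniqueness I would fix $M$, take two matrix functions $V_1(\tau)$ and $V_2(\tau)$ satisfying \eqref{dinamica_U}--\eqref{algebraica_U}, and set $W(\tau) = V_1(\tau) - V_2(\tau)$. Because the three identities are linear, $W$ satisfies the homogeneous version of the dynamic equation \eqref{dinamica_U}; the algebraic identity \eqref{algebraica_U} becomes $W'(+0) = W'(-0)$, so the derivative has no jump at the origin; and the symmetry \eqref{simetrica_U}, applied simultaneously to the pair $(V_1, V_2)$ for $M$ and its transpose, yields $W(-\tau) = W^{T}(\tau)$.

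The key step is a Lyapunov--Krasovskii argument driven by exponential stability of \eqref{sistema_1_lazo_cerrado_u_gammas}. Mimicking the construction that leads to \eqref{funcional_v_phi_11}, I would attach to $W$ a quadratic functional
\[
w_{0}(x_{t}) = x^{T}(t) W(0) x(t) + 2 x^{T}(t) \int_{-h}^{0} W(-h-\theta) A_{1} x(t+\theta)\, d\theta + \text{(distributed terms in $G$)},
\]
that is the analogue, for $W$, of the full Lyapunov-type functional built from $U(\tau, M)$. Differentiating $w_{0}(x_{t})$ along solutions of \eqref{sistema_1_lazo_cerrado_u_gammas} and integrating by parts, the contribution that would ordinarily produce the quadratic form $-x^{T}(t) M x(t)$ vanishes, precisely because $W$ has no jump at $\tau = 0$; the remaining terms cancel using the homogeneous dynamic equation satisfied by $W$. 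Hence $\dot{w}_{0}(x_{t}) \equiv 0$, so $w_{0}(x_{t})$ is constant along every trajectory. Exponential stability then sends $x_{t} \to 0$ as $t \to \infty$, forcing $w_{0}(\varphi) = 0$ for every $\varphi \in \mathrm{PC}([-h,0],\mathbb{R}^{n})$.

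The main obstacle I expect is the last step: passing from $w_{0}(\varphi) \equiv 0$ to $W \equiv 0$. The plan is a polarization/density argument: by substituting judiciously chosen piecewise-constant initial functions and short pulses as $\varphi$, the point value $W(0)$, the single-integral kernel, and the double-integral kernel can be isolated one by one, and the continuity of $W$ (inherited from the dynamic equation with continuous right-hand side) forces $W \equiv 0$ on $[-h,0]$. The symmetric extension then delivers $W \equiv 0$ on $[0,h]$, and the homogeneous dynamic equation propagates the zero forward to all $\tau \geq 0$. A secondary technical difficulty is writing out the distributed-delay boundary terms in $\dot{w}_{0}$ carefully enough that the required cancellation becomes apparent.
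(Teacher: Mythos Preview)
The paper does not supply its own proof of Theorem~\ref{exisuni}: the result is quoted from Kharitonov \cite{Kharitonov_2006} and used as a black box (notably in the proof of Corollary~\ref{unicidad_pi_s}). So there is no in-paper argument to compare against.

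That said, your outline is precisely the classical Kharitonov proof of uniqueness for the delay Lyapunov matrix. The difference $W=V_{1}-V_{2}$ satisfies the homogeneous dynamic relation and has zero jump at $\tau=0$; the associated complete-type functional $w_{0}$ then has $\dot w_{0}(x_{t})\equiv 0$ along trajectories of the exponentially stable system \eqref{sistema_1_lazo_cerrado_u_gammas}, hence $w_{0}(\varphi)\equiv 0$, and a polarization with suitably chosen initial functions kills the kernels. Two points to tighten. First, the symmetry step needs care: property \eqref{simetrica_U} couples $U(\cdot,M)$ to $U(\cdot,M^{T})$, so to get $W(-\tau)=W^{T}(\tau)$ you must argue uniqueness for the pair $(M,M^{T})$ simultaneously (or restrict first to symmetric $M$ and then extend by linearity, as is commonly done). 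Second, the ``polarization/density'' step is where the real work hides in the distributed case: the double-integral kernel involving $G(\cdot)$ must be isolated from the $A_{1}$-term, and the standard trick is to use families of piecewise-constant $\varphi$ supported on shrinking subintervals of $[-h,0]$ together with continuity of $W$; be prepared to write this out explicitly rather than leave it as a sketch.
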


\section{Main results}
In this section we give a proof for the representation of the quadratic performance index when an admissible state feedback control is considered. When the control is optimal this representation coincides with the Bellman functional. Our proof is based on the Cauchy formula, which expresses the closed-loop solution in terms of the fundamental matrix. First, we provide an expression in terms of the fundamental matrix, and next, in terms of the delay Lyapunov matrix.

\begin{proof}[Proof of Proposition \ref{proposicion_2_1}]
Given system (\ref{sistema1}), consider an admissible feedback state control of the form given by (\ref{u_gammas}) such that the closed-loop system (\ref{sistema_1_lazo_cerrado_u_gammas}), is exponentially stable. The Cauchy formula \cite{Kolmanovskii_1992} for the distributed time-delay system \eqref{sistema_1_lazo_cerrado_u_gammas} is  :
\begin{equation}
\begin{split}
x(t,\varphi)=&K(t) \varphi(0) + \int_{-h}^{0} K(t-\theta-h)A_{1}\varphi(\theta) d
\theta
+ \int_{-h}^{0} \int_{-h}^{\theta} K(t- \theta+ \xi)G(\xi) d\xi
\varphi(\theta) d\theta, \quad t\geq 0
\end{split}
\label{solucion_cauchy}
\end{equation}
where $K(t)$ is the fundamental matrix of system \eqref{sistema_1_lazo_cerrado_u_gammas}. If the closed-loop system has exponentially stable trivial solution, then the fundamental matrix satisfies the inequality
\begin{equation}
|| K(t) || \leq \gamma e^{-\beta t}, \forall t \geq 0.
\label{Ktacotada}
\end{equation}

By defining
\begin{equation}
\hat{K}(t,\theta)=K(t-\theta-h)A_{1}+\int_{-h}^{\theta} K(t- \theta+ \xi)G(\xi)
d\xi,
\label{k_gorro}
\end{equation}
equation \eqref{solucion_cauchy} is rewritten as:
\begin{equation}
x(t,\varphi)= K(t)\varphi(0)+\int_{-h}^{0} \hat{K}(t,\theta) \varphi
(\theta) d\theta.
\label{sol_cauchy_reducida}
\end{equation}

Expressing the admissible control law \eqref{u_gammas} in terms of the Cauchy solution \eqref{sol_cauchy_reducida}, and substituting \eqref{sol_cauchy_reducida} into \eqref{desempenioJ}, we obtain the following representation of the performance index:
\begin{equation}
\begin{split}
J(\varphi ,u_{L}(x_{t})) =&\varphi ^{T}(0)\Pi _{0}\varphi
(0)+2\varphi(0) ^{T} \int_{-h}^{0}\Pi _{1}(\theta )\varphi(\theta )d\theta
+\int_{-h}^{0}\int_{-h}^{0}\varphi ^{T}(\xi )\Pi_{2}(\xi ,\theta
)\varphi (\theta )d\theta d\xi ,
\end{split}
\label{J_prueba_teorema}
\end{equation}%
with matrices $\Pi _{0}$, $\Pi _{1}(\theta )$ and $\Pi_{2}(\xi,\theta)$ defined as:
\begin{equation}
\begin{split}
\Pi_{0} =&\int_{0}^{\infty } K^{T}(t)M_{1}K(t)
dt
+2\int_{0}^{\infty }\int_{-h}^{0} K^{T}(t)M_{2}(\theta )K(t+\theta
) d\theta dt\\  
&+\int_{0}^{\infty }\int_{-h}^{0}\int_{-h}^{0}K^{T}(t+\theta
_{1})M_{3}(\theta _{1},\theta _{2})K(t+\theta _{2}) d\theta
_{1}d\theta _{2}dt,
\end{split}
\label{pi0barra}
\end{equation}	
\begin{equation}
\begin{split}
\Pi _{1}(\theta ) =&\int_{0}^{\infty }K^{T}(t)M_{1}\hat{K}(t,\theta
)dt
+2\int_{-h}^{0}\int_{0}^{\infty }K^{T}(t)\left(
M_{2}\left( \theta _{2}\right) \hat{K}\left( t+\theta _{2},\theta \right)
\right. +\left. M_{2}^{T}\left( \theta _{2}\right) \hat{K}\left( t-\theta
_{2},\theta \right) \right) dtd\theta _{2} \\
& +\int_{-h}^{0}\int_{-h}^{0}\int_{0}^{\infty }K^{T}(t+\theta
_{1})M_{3}(\theta _{1},\theta _{2})\hat{K}(t+\theta _{2},\theta )dtd\theta_{1}d\theta _{2}, 
\end{split}
\label{pi1barra}
\end{equation}
\begin{equation}
\begin{split}
\Pi_{2}(\xi ,\theta ) =&\int_{0}^{\infty }\hat{K}^{T}(t,\xi )M_{1}\hat{K}%
(t,\theta )dt
+2\int_{-h}^{0}\int_{0}^{\infty }\hat{K}^{T}(t,\xi
)M_{2}(\theta _{2})\hat{K}(t+\theta _{2},\theta )dtd\theta _{2}\\
& +\int_{-h}^{0}\int_{-h}^{0}\int_{0}^{\infty }\hat{K}^{T}(t+\theta
_{1},\xi )M_{3}(\theta _{1},\theta _{2})\hat{K}(t+\theta _{2},\theta
)dtd\theta _{1}d\theta _{2},
\end{split}
\label{pi2barra}
\end{equation}	
where
\begin{itemize}
	\setlength{\parskip}{0pt}
	\setlength{\itemsep}{0pt plus 1pt}
	\item $M_{1}= Q + \Gamma_{0}^{T}R\Gamma_{0}$,
	\item$M_{2}(\theta)=\Gamma_{0}^{T}R\Gamma_{1}(\theta)$, $\theta \in [-h,0]$
	\item$M_{3}(\theta_{1},\theta_{2})=\Gamma_{1}^{T}(\theta_{1})R\Gamma_{1}(\theta_{2})$, $\theta_{1}, \theta_{2} \in [-h,0]$.
\end{itemize}

As the closed-loop system is assumed to be exponentially stable, the matrices $\Pi_{0}$, $\Pi_{1}(\cdot)$ and $\Pi_{2}(\cdot,\cdot)$ are well defined because the fundamental matrix $K(t)$ satisfies \eqref{Ktacotada}, hence the convergence under the improper integrals is insured.

i) By substituting the matrices $M_1, M_2(\cdot)$ and $M_3(\cdot,\cdot)$, one can rewrite  (\ref{pi0barra}) as
\begin{eqnarray}
&&\Pi_{0}%
\begin{array}{c}
=%
\end{array}%
\int_{0}^{\infty } \Psi^{T}(t) \left[
\begin{array}{cc}
Q+\Gamma _{0}^{T}R\Gamma _{0}\text{ \ \ } & \Gamma _{0}^{T}R \\
R\Gamma _{0} & R%
\end{array}%
\right] \Psi(t) dt.  \label{simetrica_Pi_0}
\end{eqnarray}%
where:
$ \Psi^{T}(t)=\left[
\begin{array}{cc}
K^{T}(t) & \int_{-h}^{0}K^{T}(t+\theta )\Gamma _{1}^{T}(\theta )d\theta%
\end{array}%
\right].$
As the symmetric matrices $Q$ and $R$ in  (\ref{desempenioJ}) are positive definite, Schur complements imply that the right-hand side in (\ref{simetrica_Pi_0}) is positive definite.

\textit{ii)} The fact that the matrix $\Pi_{1}(\theta)$ is well defined on $[-h,0]$, follows directly from the properties of matrix $K(t)$ on the same interval. For the same reason, the matrix $\Pi_{1}(\theta)$ is also continuous over $[-h,0]$. 

\textit{iii)} As in ii), the matrix $\Pi_{2}(\xi,\theta)$ is well defined and continuous over $[-h,0]\times[-h,0]$. The proof of the symmetry property of matrix $\Pi_2(\xi,\theta)$ is somehow involved, but straightforward. It is obtained by direct transposition of the expression (\ref{pi2barra}), appropriate  changes of variables, along with the use of fundamental matrix properties, it shows in next expressions 
\begin{equation*}
\begin{split}
&\Pi _{2}^{T}(\xi ,\theta )= \int_{0}^{\infty }\bigg(A_{1}^{T}K^{T}(t-\theta
-h)M_{1}K(t-\xi -h)A_{1} 
+\int_{-h}^{\xi }A_{1}^{T}K^{T}(t-\theta -h)M_{1}K(t-\xi +\delta )G(\delta
)d\delta  \\
& +\int_{-h}^{\theta }G^{T}(\delta )K^{T}(t-\theta +\delta )M_{1}K(t-\xi
-h)A_{1}d\delta  
 +\int_{-h}^{\theta }\int_{-h}^{\xi }G^{T}(\delta _{1})K^{T}(t-\theta
+\delta _{1})M_{1}K(t-\xi +\delta _{2})G(\delta _{2})d\delta _{2}d\delta _{1}
\\
& +\int_{-h}^{0}A_{1}^{T}K^{T}(t-\theta -h)M_{2}(\theta _{2})K(t+\theta
_{2}-\xi -h)A_{1}d\theta _{2} 
+\int_{-h}^{0}A_{1}^{T}K^{T}(t+\theta _{2}-\theta -h)M_{2}^{T}(\theta
_{2})K(t-\xi -h)A_{1}d\theta _{2} \\
& +\int_{-h}^{0}\int_{-h}^{\xi }A_{1}^{T}K^{T}(t-\theta -h)M_{2}(\theta
_{2})K(t+\theta _{2}-\xi +\delta )G(\delta )d\delta d\theta _{2} \\
& +\int_{-h}^{0}\int_{-h}^{\xi }A_{1}^{T}K^{T}(t+\theta _{2}-\theta
-h)M_{2}^{T}(\theta _{2})K(t-\xi +\delta )G(\delta )d\delta d\theta _{2} \\
& +\int_{-h}^{0}\int_{-h}^{\xi }\int_{-h}^{\theta }G^{T}(\delta
_{2})K^{T}(t+\theta _{2}-\theta +\delta _{2})M_{2}^{T}(\theta _{2})K(t-\xi
+\delta _{1})G(\delta _{1})d\delta _{2}d\delta _{1}d\theta _{2} \\
& +\int_{-h}^{0}\int_{-h}^{\theta }G^{T}(\delta )K^{T}(t+\theta _{2}-\theta
+\delta )M_{2}^{T}(\theta _{2})K(t-\xi -h)A_{1}d\delta d\theta _{2}\\
\end{split}%
\end{equation*}%
\begin{equation}
\begin{split}
& +\int_{-h}^{0}\int_{-h}^{\theta }G^{T}(\delta )K^{T}(t-\theta +\delta
)M_{2}(\theta _{2})K(t+\theta _{2}-\xi -h)A_{1}d\delta d\theta _{2} \\
& +\int_{-h}^{0}\int_{-h}^{\theta }\int_{-h}^{\xi }G^{T}(\delta
_{1})K^{T}(t-\theta +\delta _{1})M_{2}(\theta _{2})K(t+\theta _{2}-\xi
+\delta _{2})G(\delta _{2})d\delta _{2}d\delta _{1}d\theta _{2} \\
& +\int_{-h}^{0}\int_{-h}^{0}A_{1}^{T}K^{T}(t+\theta _{1}-\theta
-h)M_{3}(\theta _{1},\theta _{2})K(t+\theta _{2}-\xi -h)A_{1}d\theta
_{1}d\theta _{2} \\
& +\int_{-h}^{0}\int_{-h}^{0}\int_{-h}^{\xi }A_{1}^{T}K^{T}(t+\theta
_{1}-\theta -h)M_{3}(\theta _{1},\theta _{2})K(t+\theta _{2}-\xi +\delta
)G(\delta )d\delta d\theta _{1}d\theta _{2} \\
& +\int_{-h}^{0}\int_{-h}^{0}\int_{-h}^{\theta }G^{T}(\delta )K^{T}(t+\theta
_{1}-\theta +\delta )M_{3}(\theta _{1},\theta _{2})K(t+\theta _{2}-\xi
-h)A_{1}d\delta d\theta _{1}d\theta _{2} \\
& +\int_{-h}^{0}\int_{-h}^{0}\int_{-h}^{\theta }\int_{-h}^{\xi }G^{T}(\delta
_{1})K^{T}(t+\theta _{1}-\theta +\delta _{1})M_{3}(\theta _{1},\theta
_{2})K(t+\theta _{2}-\xi +\delta _{2}) 
G(\delta _{2})d\delta _{2}d\delta _{1}d\theta _{1}d\theta _{2}%
\bigg)dt,
\end{split}
\label{Pi_2_Trans}
\end{equation}%
and
\begin{equation}
\begin{split}
&\Pi _{2}(\theta ,\xi )= \int_{0}^{\infty }\bigg(A_{1}^{T}K^{T}(t-\theta
-h)M_{1}K(t-\xi -h)A_{1} 
+\int_{-h}^{\xi }A_{1}^{T}K^{T}(t-\theta -h)M_{1}K(t-\xi +\delta )G(\delta
)d\delta  \\
& +\int_{-h}^{\theta }G^{T}(\delta )K^{T}(t-\theta +\delta )M_{1}K(t-\xi
-h)A_{1}d\delta  
+\int_{-h}^{\theta }\int_{-h}^{\xi }G^{T}(\delta _{1})K^{T}(t-\theta
+\delta _{1})M_{1}K(t-\xi +\delta _{2})G(\delta _{2})d\delta _{2}d\delta _{1}
\\
& +\int_{-h}^{0}A_{1}^{T}K^{T}(t-\theta -h)M_{2}(\theta _{2})K(t+\theta
_{2}-\xi -h)A_{1}d\theta _{2} 
+\int_{-h}^{0}A_{1}^{T}K^{T}(t+\theta _{2}-\theta -h)M_{2}^{T}(\theta
_{2})K(t-\xi -h)A_{1}d\theta _{2} \\
& +\int_{-h}^{0}\int_{-h}^{\xi }A_{1}^{T}K^{T}(t-\theta -h)M_{2}(\theta
_{2})K(t+\theta _{2}-\xi +\delta )G(\delta )d\delta d\theta _{2} \\
& +\int_{-h}^{0}\int_{-h}^{\xi }A_{1}^{T}K^{T}(t+\theta _{2}-\theta
-h)M_{2}^{T}(\theta _{2})K(t-\xi +\delta )G(\delta )d\delta d\theta _{2} \\
& +\int_{-h}^{0}\int_{-h}^{\xi }\int_{-h}^{\theta }G^{T}(\delta
_{2})K^{T}(t+\theta _{2}-\theta +\delta _{2})M_{2}^{T}(\theta _{2})K(t-\xi
+\delta _{1})G(\delta _{1})d\delta _{2}d\delta _{1}d\theta _{2} \\
& +\int_{-h}^{0}\int_{-h}^{\theta }G^{T}(\delta )K^{T}(t+\theta _{2}-\theta
+\delta )M_{2}^{T}(\theta _{2})K(t-\xi -h)A_{1}d\delta d\theta _{2} \\
& +\int_{-h}^{0}\int_{-h}^{\theta }G^{T}(\delta )K^{T}(t-\theta +\delta
)M_{2}(\theta _{2})K(t+\theta _{2}-\xi -h)A_{1}d\delta d\theta _{2} \\
& +\int_{-h}^{0}\int_{-h}^{\theta }\int_{-h}^{\xi }G^{T}(\delta
_{1})K^{T}(t-\theta +\delta _{1})M_{2}(\theta _{2})K(t+\theta _{2}-\xi
+\delta _{2})G(\delta _{2})d\delta _{2}d\delta _{1}d\theta _{2} \\
& +\int_{-h}^{0}\int_{-h}^{0}A_{1}^{T}K^{T}(t+\theta _{1}-\theta
-h)M_{3}(\theta _{1},\theta _{2})K(t+\theta _{2}-\xi -h)A_{1}d\theta
_{1}d\theta _{2} \\
& +\int_{-h}^{0}\int_{-h}^{0}\int_{-h}^{\xi }A_{1}^{T}K^{T}(t+\theta
_{1}-\theta -h)M_{3}(\theta _{1},\theta _{2})K(t+\theta _{2}-\xi +\delta
)G(\delta )d\delta d\theta _{1}d\theta _{2} \\
& +\int_{-h}^{0}\int_{-h}^{0}\int_{-h}^{\theta }G^{T}(\delta )K^{T}(t+\theta
_{1}-\theta +\delta )M_{3}(\theta _{1},\theta _{2})K(t+\theta _{2}-\xi
-h)A_{1}d\delta d\theta _{1}d\theta _{2} \\
& +\int_{-h}^{0}\int_{-h}^{0}\int_{-h}^{\theta }\int_{-h}^{\xi }G^{T}(\delta
_{1})K^{T}(t+\theta _{1}-\theta +\delta _{1})M_{3}(\theta _{1},\theta
_{2})K(t+\theta _{2}-\xi +\delta _{2}) 
 G(\delta _{2})d\delta _{2}d\delta _{1}d\theta _{1}d\theta _{2}%
\bigg)dt.
\end{split}
\label{Pi_2_theta_xi}
\end{equation}

Finally, it shown that \eqref{Pi_2_Trans} and \eqref{Pi_2_theta_xi} are equal.
$\square$
\end{proof}

According to the proof of Proposition \ref{proposicion_2_1}, the Bellman functional \eqref{funcional_v_phi_11} can be expressed in terms of the state $x_t$ as:
\begin{equation}
\begin{split}
V(x_{t}) = & x^{T}(t) \Pi_{0} x(t) +2 x^{T}(t) \int_{-h}^{0}
\Pi_{1}(\theta) x (t+\theta) d \theta\\
&+ \int_{-h}^{0} \int_{-h}^{0} x^{T}(t+\xi) \Pi_{2} (\xi,\theta)
x(t+\theta) d \xi d \theta.
\end{split}
\label{funcional_v_phi_11_x_t}
\end{equation}	

Next, we reveal in our main result the connection between the Bellman functional and the Lyapunov matrix of the closed-loop time-delay systems \eqref{sistema_1_lazo_cerrado_u_gammas}, a fact that is well known in the delay free case. 

\begin{proof}[Proof of Theorem \ref{teorema_2_2}]
According to \eqref{Vpositiva} the expression 
\eqref{funcional_v_phi_11_x_t} is positive
definite. Assuming that $x_{t}$ is a trajectory of system \eqref{sistema1}, and defining
\begin{equation}
H(x_{t},u)=\left. \dot{V}(x_{t})\right\vert _{\substack{\eqref{sistema1} \\ u-admissible}}+x^{T}(t)Qx(t)+u^{T}(t)Ru(t),
\label{H_x_u2}
\end{equation}
where the time derivative of \eqref{funcional_v_phi_11_x_t} along the trajectories of the system \eqref{sistema1} is
\begin{equation}
\begin{split}
&\left. \dot{V}(x_{t})\right\vert _{\substack{\eqref{sistema1}
\\ u-admissible}} =\left( Ax(t)+Bx(t-h)+\int_{-h}^{0}E(\theta )x(t+\theta
)d\theta +Du(t)\right) ^{T}\Pi _{0}x(t) \\
&+x^{T}(t)\Pi _{0}\left( Ax(t)+Bx(t-h)+\int_{-h}^{0}E(\theta )x(t+\theta
)d\theta +Du(t)\right) \\
&+\left( Ax(t)+Bx(t-h)+\int_{-h}^{0}E(\theta )x(t+\theta )d\theta
+Du(t)\right) ^{T}\int_{-h}^{0}\Pi _{1}(\theta )x(t+\theta )d\theta \\
&+x^{T}(t)\int_{-h}^{0}\Pi _{1}(\theta )\frac{\partial }{\partial \theta }
x(t+\theta )d\theta +\int_{-h}^{0}\frac{\partial }{\partial \theta }
x^{T}(t+\theta )\Pi _{1}^{T}(\theta )d\theta x(t) \\
&+\int_{-h}^{0}x^{T}(t+\theta )\Pi _{1}^{T}(\theta )d\theta \left(
Ax(t)+Bx(t-h)+\int_{-h}^{0}E(\theta )x(t+\theta )d\theta +Du(t)\right) \\
&+\int_{-h}^{0}\int_{-h}^{0}\frac{\partial }{\partial \xi }\left(
x^{T}(t+\xi )\right) \Pi _{2}(\xi ,\theta )x(t+\theta )d\theta d\xi   
\\
&+\int_{-h}^{0}\int_{-h}^{0}x^{T}(t+\xi )\Pi _{2}(\xi ,\theta )\frac{\partial }{\partial \theta }\left( x(t+\theta )\right) d\theta d\xi .
\end{split}
\label{V_x_t_punto}
\end{equation}

Substituting \eqref{V_x_t_punto} into \eqref{H_x_u2}, implies that
\begin{equation}
\begin{split}
&H(x_{t},u) =x^{T}(t)\left( A^{T}\Pi _{0}+x^{T}(t)\Pi _{0}A+Q\right)
x(t)\\
&+2x^{T}(t-h)B^{T}\Pi _{0}x(t)+2x^{T}(t)\int_{-h}^{0}A^{T}\Pi _{1}(\theta)x(t+\theta )d\theta\\
&+2x^{T}(t-h)\int_{-h}^{0}B^{T}\Pi _{1}(\theta )x(t+\theta )d\theta +2x^{T}(t)\int_{-h}^{0}\Pi _{1}(\theta )\frac{\partial }{\partial \theta }%
x(t+\theta )d\theta\\
&+\int_{-h}^{0}\int_{-h}^{0}\frac{\partial }{\partial \xi }\left(
x^{T}(t+\xi )\right) \Pi _{2}(\xi ,\theta )x(t+\theta )d\theta d\xi\\
&+\int_{-h}^{0}\int_{-h}^{0}x^{T}(t+\xi )\Pi _{2}(\xi ,\theta )\frac{%
\partial }{\partial \theta }\left( x(t+\theta )\right) d\theta d\xi\\
&+2\int_{-h}^{0}x^{T}(t+\theta )E^{T}(\theta )\Pi _{0}d\theta x(t)+2\int_{-h}^{0}\int_{-h}^{0}x^{T}(t+\xi )E^{T}(\xi )\Pi _{1}(\theta
)x(t+\theta )d\xi d\theta\\
&+2u^{T}(t)\int_{-h}^{0}D^{T}\Pi _{1}(\theta )x(t+\theta )d\theta
+2x^{T}(t)\Pi _{0}Du(t)+u^{T}(t)Ru(t). 
\end{split}
\label{H_x_t_u_1}
\end{equation}

By the fundamental theorem of calculus of variations \cite{kirk2004optimal}
\begin{equation*}
\min_{u-admissible}H(x_{t},u)=H(x_{t}^{\ast },u^{\ast }),
\end{equation*}%
\begin{equation*}
\frac{\partial }{\partial u}H(x_{t},u)=2D^{T}\Pi
_{0}x(t)+2D^{T}\int_{-h}^{0}\Pi _{1}(\theta )x(t+\theta )d\theta +2Ru(t)=0,
\end{equation*}%
when evaluated at  $u=u^*$; we have that%
\begin{equation}
u^{\ast }(t)=-R^{-1}D^{T}\Pi _{0}x(t)-R^{-1}D^{T}\int_{-h}^{0}\Pi
_{1}(\theta )x(t+\theta )d\theta .  \label{u_0_t_optima1}
\end{equation}
Moreover, as%
\begin{equation*}
\frac{\partial ^{2}}{\partial u^{2}}H(x_{t},u)=2R>0,
\end{equation*}%
we conclude that $u^{*}(t)$ is a local minimum of \eqref{H_x_u2}.

For $u^{*}(t)$ in equation \eqref{u_0_t_optima1} to stabilize system \eqref{sistema1} and to be a global minimum, it has to satisfy the
Hamilton-Jacobi-Bellman equation \eqref{inciso_a}.
If $t=0$, $x_{0}=\varphi$, then
\begin{equation}
\left. \dot{V}(\varphi )\right\vert _{\substack{ \eqref{sistema1} \\ u=u^{\ast }}}+\varphi ^{T}(0)Q\varphi (0)+u^{\ast
^{T}}(t)Ru^{\ast }(t)=0,  \label{ecuacion_HJB}
\end{equation}
where%
\begin{equation}
\begin{split}
&\left. \dot{V}(\varphi )\right\vert _{\substack{ \eqref{sistema1} \\ u=u^{\ast }}} =\varphi ^{T}(0)\left( A^{T}\Pi
_{0}+\Pi _{0}A\right) \varphi (0)+2\varphi ^{T}(0)\Pi _{0}B\varphi (-h) 
 \\
&+2\varphi ^{T}(0)\int_{-h}^{0}\Pi _{1}(\theta )\left( \frac{d}{d\theta }%
\varphi (\theta )\right) d\theta+2\varphi ^{T}(0)\int_{-h}^{0}A^{T}\Pi _{1}(\theta )\varphi (\theta
)d\theta    \\
&+2\varphi ^{T}(-h)\int_{-h}^{0}B^{T}\Pi _{1}(\theta )\varphi (\theta
)d\theta+\int_{-h}^{0}\int_{-h}^{0}\left( \frac{d}{d\xi }\varphi ^{T}(\xi )\right)
\Pi _{2}(\xi ,\theta )\varphi (\theta )d\theta d\xi    \\
&+\int_{-h}^{0}\int_{-h}^{0}\varphi ^{T}(\xi )\Pi _{2}(\xi ,\theta )\left( 
\frac{d}{d\theta }\varphi (\theta )\right) d\theta d\xi+2\varphi ^{T}(0)\int_{-h}^{0}\Pi _{0}E(\theta )\varphi (\theta )d\theta  
\\
&+2\int_{-h}^{0}\int_{-h}^{0}\varphi ^{T}(\xi )E^{T}(\xi )\Pi _{1}(\theta
)\varphi (\theta )d\xi d\theta+2u^{\ast T}(t)\int_{-h}^{0}D^{T}\Pi _{1}(\theta )\varphi (\theta )d\theta +2\varphi ^{T}(0)\Pi _{0}Du^{\ast }(t). 
\end{split}
\label{V_phi_punto}
\end{equation}

Substituting \eqref{V_phi_punto} and \eqref{u_0_t_optima1} into the left-hand of \eqref{ecuacion_HJB}, gives
\begin{equation*}
\begin{split}
&\left. \dot{V}(\varphi )\right\vert _{\substack{\eqref{sistema1} \\ u=u^{\ast }}}+\varphi ^{T}(0)Q\varphi (0)+u^{\ast
^{T}}(t)Ru^{\ast }(t) \\
&=\varphi ^{T}(0)\left( A^{T}\Pi _{0}+\Pi _{0}A-\Pi _{0}DR^{-1}D^{T}\Pi
_{0}+\Pi _{1}^{T}(0)+\Pi _{1}(0)+Q\right) \varphi (0) \\
&+2\varphi ^{T}\left( 0\right) \left( \Pi _{0}B-\Pi _{1}(-h)\right) \varphi
(-h) \\
&+2\varphi ^{T}(0)\int_{-h}^{0}\left( \Pi _{0}E(\theta )-\Pi
_{0}DR^{-1}D^{T}\Pi _{1}(\theta )+A^{T}\Pi _{1}(\theta )-\frac{d\Pi
_{1}(\theta )}{d\theta }+\Pi _{2}(0,\theta )\right) \varphi (\theta )d\theta 
\\
&+2\varphi ^{T}(-h)\int_{-h}^{0}\left( B^{T}\Pi _{1}(\theta )-\Pi
_{2}(-h,\theta )\right) \varphi (\theta )d\theta  \\
&+\int_{-h}^{0}\int_{-h}^{0}\varphi ^{T}(\xi )\left( -\frac{\partial \Pi
_{2}(\xi ,\theta )}{\partial \xi }-\frac{\partial \Pi _{2}(\xi ,\theta )}{%
\partial \theta }-\Pi _{1}^{T}(\xi )DR^{-1}D^{T}\Pi _{1}(\theta )+2E^{T}(\xi
)\Pi _{1}(\theta )\right) \varphi (\theta )d\xi d\theta .
\end{split}
\end{equation*}

Therefore, equation \eqref{ecuacion_HJB} is satisfied if and only if the set of equations \eqref{SetPartialEquations} is true. $\square$
\end{proof}

\begin{remark}
\label{remark_1}
The existence of a solution for the set of equations given by \eqref{SetPartialEquations} can be proved by the arguments given in \cite{Ross_1969}. If the conditions of Theorem \ref{teorema_2_2} are satisfied, matrices $\Pi_{0}, \Pi_{1}(\cdot)$ and $\Pi_2(\cdot,\cdot)$ can be related to a function $Z(\xi,\theta)$ as:
\begin{equation}
\begin{split}
\Pi_0 =Z(0,0), \\
\Pi_1(\theta) =-\frac{\partial Z(0,\theta)}{\partial \theta}, \\
\Pi_2(\xi,\theta) =\frac{\partial^{2} Z(\xi,\theta)}{\partial \xi \partial \theta},
\end{split}
\label{zetas}
\end{equation}	
with $Z(\xi,\theta)=Z^{T}(\theta,\xi)$. Now, define the function $W(\xi,\theta)$ as follows:
\begin{equation*}
\begin{split}
W(\xi,\theta)=\frac{\partial Z(\xi,\theta)}{\partial \xi}.
\end{split}
\label{W}
\end{equation*}	
It follows from \eqref{zetas}, and the fact that $Z(\xi,\theta)=Z^{T}(\theta,\xi)$ that
\begin{equation}
\begin{split}
\Pi_2(\xi,\theta) =\frac{\partial W(\xi,\theta)}{\partial \theta},\\
\Pi_1(\theta) =-W^{T}(\theta,0),   \Pi_1^{T}(\theta) =-W(\theta,0).
\end{split}
\label{PIS}
\end{equation}	
By using the third and fourth equations of the set given by \eqref{SetPartialEquations} and the equations \eqref{PIS}, we arrive at the following single second order partial differential equation for $W(\xi,\theta)$:
\begin{equation}
\begin{split}
\frac{\partial^{2} W(\xi,\theta)}{\partial\xi \partial \theta} + \frac{\partial^{2} W(\xi,\theta)}{\partial \theta^{2}}+\left[W(\xi,0)DR^{-1}D^{T}+2E^{T}(\xi)\right]W^{T}(\theta,0)=0.
\end{split}
\label{EDPSO}
\end{equation}	
with a boundary constraint:
\begin{equation}
\begin{split}
\frac{\partial W(-h,\theta)}{\partial \theta} =-B^{T}W^{T}(\theta,0).
\end{split}
\label{CF}
\end{equation}	
The equations given by \eqref{EDPSO} and \eqref{CF}, have the same structure \textbf{as} the equations presented in \cite{Ross_1969}; the conditions for the existence and uniqueness of the solutions of the same structure of equation given by \eqref{EDPSO} were presented in \cite{bernstein2016existence}, \cite{hughes1977well}, \cite{Ross_1969}.
\end{remark}
\textbf{The} following proposition gives the relation between the matrices $\Pi_{0}$, $\Pi_{1}(\cdot)$ and $\Pi_{2}(\cdot,\cdot)$ with the Lyapunov matrix for time delay systems.
\begin{proposition}
 \label{escrituramatlyap}
Matrices \eqref{pi0barra}-\eqref{pi2barra} can be expressed as:
\begin{equation}
\label{Pi_0_con_matrices_Lyapunov}
\Pi_{0} = U(0,M_{1}) + 2 \int_{-h}^{0} U(\theta, M_{2}(\theta)) d\theta 
+\int_{-h}^{0} \int_{-h}^{0} U(-\theta_{1} + \theta_{2}, M_{3}(\theta_1,\theta_{2})) d\theta_{1} d \theta_{2},
\end{equation}
\begin{equation}
    \begin{split}
        \Pi_{1}(\theta ) =&U(-\theta -h,M_{1})A_{1}+\int_{-h}^{\theta }U(-\theta +\delta
		,M_{1})G(\delta )d\delta
		+\int_{-h}^{0}U(\theta _{2}-\theta -h,M_{2}(\theta
		_{2}))d\theta _{2}A_{1}\\
		&+\int_{-h}^{0}U(-\theta _{2}-\theta -h,M^{T}_{2}(\theta
		_{2}))d\theta _{2}A_{1}
		+\int_{-h}^{0}\int_{-h}^{\theta }U(\theta _{2}-\theta +\delta
		,M_{2}(\theta _{2}))G(\delta )d\delta d\theta
		_{2}\\
		& +\int_{-h}^{0}\int_{-h}^{\theta }U(-\theta _{2}-\theta +\delta
		,M^{T}_{2}(\theta _{2}))G(\delta )d\delta d\theta
		_{2}
		+\int_{-h}^{0}\int_{-h}^{0}U(-\theta _{1}+\theta _{2}-\theta
		-h,M_{3}(\theta _{1},\theta _{2}))d\theta _{1}d\theta _{2}A_{1}\\ 
		& +\int_{-h}^{0}\int_{-h}^{0}\int_{-h}^{\theta }U(-\theta _{1}+\theta_{2}-\theta +\delta ,M_{3}(\theta _{1},\theta _{2}))G(\delta )d\delta d\theta_{1}d\theta _{2},
	\end{split}
		\label{ec_55_pi_1}
\end{equation}
	\begin{equation}
	\begin{split}
		\Pi_{2}(\xi ,\theta ) =&A_{1}^{T}U(\xi -\theta ,M_{1})A_{1}
		+A_{1}^{T}\int_{-h}^{\theta}U(\xi +h-\theta +\delta ,M_{1})G(\delta )d\delta
		+\int_{-h}^{\xi }G^{T}(\delta )U(\xi -\delta -\theta
		-h,M_{1})A_{1}d\delta\\
		&+\int_{-h}^{\xi }\int_{-h}^{\theta }G^{T}(\delta
		_{1})U(\xi -\theta -\delta _{1}+\delta _{2},M_{1})G(\delta _{2})d\delta_{2}d\delta _{1}
		+2A_{1}^{T}\int_{-h}^{0}U(\xi +\theta _{2}-\theta ,M_{2}(\theta
		_{2}))d\theta _{2}A_{1}\\
		&+2A_{1}^{T}\int_{-h}^{0}\int_{-h}^{\theta }U(\xi +h+\theta
		_{2}-\theta +\delta ,M_{2}(\theta _{2}))G(\delta )d\delta d\theta
		_{2}\\
		 &+2\int_{-h}^{0}\int_{-h}^{\xi }G^{T}(\delta )U(\xi -\delta +\theta_{2}-\theta -h,M_{2}(\theta _{2}))d\delta d\theta _{2}A_{1}\\
		 &+2\int_{-h}^{0}\int_{-h}^{\xi }\int_{-h}^{\theta}G^{T}(\delta
		_{1})U(\xi -\delta _{1}+\theta _{2}-\theta +\delta _{2},M_{2}(\theta
		_{2}))
		G(\delta _{2})d\delta _{2}d\delta _{1}d\theta _{2}\\
		&+A_{1}^{T}\int_{-h}^{0}\int_{-h}^{0}U(-\theta _{1}+\xi +\theta
		_{2}-\theta ,M_{3}(\theta _{1},\theta _{2}))d\theta _{1}d\theta _{2}A_{1}\\
		 & +A_{1}^{T}\int_{-h}^{0}\int_{-h}^{0}\int_{-h}^{\theta }U(-\theta
		_{1}+\xi +h+\theta _{2}-\theta +\delta ,M_{3}(\theta _{1},\theta
		_{2}))
		G(\delta )d\delta d\theta _{1}d\theta _{2}\\
		& +\int_{-h}^{0}\int_{-h}^{0}\int_{-h}^{\xi }G^{T}(\delta )U(-\theta
		_{1}+\xi -\delta +\theta _{2}-\theta -h,M_{3}(\theta _{1},\theta_{2}))
		 d\delta d\theta _{1}d\theta _{2}A_{1}\\
	     & +\int_{-h}^{0}\int_{-h}^{0}\int_{-h}^{\xi }\int_{-h}^{\theta
		}G^{T}(\delta _{1})U(-\theta _{1}+\xi -\delta _{1}+\theta _{2}-\theta
		+\delta _{2},M_{3}(\theta _{1},\theta _{2}))
		G(\delta _{2})d\delta
		_{2}d\delta _{1}d\theta _{1}d\theta _{2}.
	    \label{ec_pi_2_explicita}    
	    \end{split}
	\end{equation}
\end{proposition}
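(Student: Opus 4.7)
My proof plan rests on a single shift-invariance identity for the fundamental matrix: for any $n\times n$ matrix $M$ and any $s_{1},s_{2}\in[-h,0]$,
$$\int_{0}^{\infty}K^{T}(t+s_{1})\,M\,K(t+s_{2})\,dt = U(s_{2}-s_{1},M).$$
To verify this, substitute $\tau=t+s_{1}$ to rewrite the integral as $\int_{s_{1}}^{\infty}K^{T}(\tau)MK(\tau+s_{2}-s_{1})d\tau$; since $K(\tau)\equiv 0$ on $[-h,0)$, the contribution of $\tau\in[s_{1},0)$ vanishes and the integral collapses to the definition \eqref{matriz_U_0_tau_M} of $U(s_{2}-s_{1},M)$. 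This identity is the only mechanism used to produce Lyapunov matrices; the rest is bookkeeping.

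With this identity, the formula \eqref{Pi_0_con_matrices_Lyapunov} for $\Pi_{0}$ follows by applying it term-by-term to \eqref{pi0barra}, with $(s_{1},s_{2})=(0,0)$ for the $M_{1}$ term, $(0,\theta)$ for the $M_{2}(\theta)$ term, and $(\theta_{1},\theta_{2})$ for the $M_{3}(\theta_{1},\theta_{2})$ term, after Fubini brings the $t$-integral innermost.

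For $\Pi_{1}(\theta)$, I first substitute the definition \eqref{k_gorro} of $\hat{K}(t,\theta)$ into \eqref{pi1barra}, splitting each occurrence of $\hat{K}(t+s,\theta)$ into a $K(t+s-\theta-h)A_{1}$ piece and an $\int_{-h}^{\theta}K(t+s-\theta+\delta)G(\delta)d\delta$ piece. After Fubini, every innermost $t$-integral acquires the form $\int_{0}^{\infty}K^{T}(t+s_{1})\widetilde{M}K(t+s_{2})\,dt$ with shifts $s_{1},s_{2}\in[-h,0]$, and the shift-invariance identity converts it into $U(s_{2}-s_{1},\widetilde{M})$. Collecting the eight resulting contributions (two from the $M_{1}$ line, four from the $M_{2}$ and $M_{2}^{T}$ lines, two from the $M_{3}$ line) reproduces \eqref{ec_55_pi_1} term-by-term.

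For $\Pi_{2}(\xi,\theta)$ the strategy is the same, except that both $\hat{K}$-factors in each summand of \eqref{pi2barra} must be expanded, so each of the three summands produces four cross-terms. After Fubini, each innermost $t$-integral again fits the shift-invariance pattern and contributes a Lyapunov matrix whose first argument is the difference of the shifts attached to the two $K$-factors. The main obstacle is purely combinatorial: expanding $\Pi_{2}$ generates twelve terms, each with up to four nested integrations over $\delta$, $\delta_{1}$, $\delta_{2}$, $\theta_{1}$, $\theta_{2}$, and the shift argument of each $U(\cdot,\cdot)$ must be identified correctly, paying particular attention to the upper limits $\int_{-h}^{\xi}$ and $\int_{-h}^{\theta}$ (inherited unchanged from the expansion of $\hat{K}^{T}(t,\xi)$ and $\hat{K}(t,\theta)$) and to the symmetry relation \eqref{simetrica_U}, which is implicitly needed to keep the $M_{2}^{T}$ contributions in the same form as the $M_{2}$ ones. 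A compact presentation is to tabulate, for each of the twelve cross-terms, the pair $(s_{1},s_{2})$ of shifts and the residual integration variables, and then read off \eqref{ec_pi_2_explicita} line-by-line.
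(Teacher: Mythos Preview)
Your proposal is correct and follows essentially the same route as the paper: expand each occurrence of $\hat K$ via \eqref{k_gorro}, interchange the $t$-integral with the finite integrals (Fubini), and recognize every resulting $\int_{0}^{\infty}K^{T}(t+s_{1})M K(t+s_{2})\,dt$ as $U(s_{2}-s_{1},M)$ by the change of variable $\tau=t+s_{1}$ together with $K(\tau)=0$ for $\tau<0$. The paper states this as ``by some change of variable'' and then asserts that ``the same procedure'' yields \eqref{ec_55_pi_1} and \eqref{ec_pi_2_explicita}; your write-up simply makes the shift-invariance identity explicit up front, which is a presentational improvement but not a different argument.
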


\begin{proof}[Proof of Proposition \ref{escrituramatlyap}]
The expression of \eqref{pi0barra}-\eqref{pi2barra} in terms of Lyapunov matrices is revealed by introducing in each summand of the matrices, the solution \eqref{sol_cauchy_reducida}, and, when needed, equation \eqref{k_gorro}. The result is obtained by using the Definition \ref{def_matris_lyapunov} for the delay Lyapunov matrix function and its properties given in Theorem \ref{teorema_santos}. The three terms of  equation \eqref{pi0barra} corresponding to $\Pi_0$ can be expressed straightforwardly as:
\begin{equation}
\int_{0}^{\infty} K^{T}(t) M_{1} K(t) dt=U(0,M_{1}),
\label{u_0_m1_de_pi_0}
\end{equation}
\begin{equation}
2 \int_{0}^{\infty} \int_{-h}^{0}  K^{T}(t)M_{2}(\theta)K(t+\theta) %
 d\theta dt= 2 \int_{-h}^{0} U(\theta, M_{2}(\theta)) d\theta,
\label{int_2_de_pi_0}
\end{equation}
and, by some change of variable,
\begin{equation}
\begin{split}
&\int_{0}^{\infty} \int_{-h}^{0} \int_{-h}^{0} K^{T}(t+%
\theta_{1})M_{3}(\theta_{1},\theta_{2})K(t+\theta_{2})  d \theta_{1}
d \theta_{2} dt
= \int_{-h}^{0} \int_{-h}^{0} U(-\theta_{1} + \theta_{2}, M_{3}(\theta_1 ,
\theta_{2})) d\theta_{1} d \theta_{2}.
\end{split}
\label{int_3_de_pi_0}
\end{equation}

Now, substituting the right-hand side of \eqref{u_0_m1_de_pi_0}-\eqref{int_3_de_pi_0}  into \eqref{pi0barra} we arrive at (\ref{Pi_0_con_matrices_Lyapunov}).
The same procedure is applied to the summands of the expressions \eqref{pi1barra} for $\Pi_{1}(\cdot)$ and \eqref{pi2barra} for  $\Pi_{2}(\cdot,\cdot)$ to obtain (\ref{ec_55_pi_1}). The same procedure is applied to the summands of the expressions \eqref{pi1barra} for $\Pi_{1}(\cdot)$ and \eqref{pi2barra} for  $\Pi_{2}(\cdot,\cdot)$ to obtain (\ref{ec_55_pi_1}) and (\ref{ec_pi_2_explicita}). $\square$ 
\end{proof}

The above result has a consequence of major importance:

\begin{corollary} \label{unicidad_pi_s}
The matrix $\Pi_{0}$ \textit{and the matrix functions} $\Pi_{1}(\cdot)$, $\Pi_{2}(\cdot,\cdot)$ \textit{in Proposition \ref{escrituramatlyap} exist and are unique}.
\end{corollary}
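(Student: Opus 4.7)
The plan is to invoke Proposition \ref{escrituramatlyap} as an explicit representation and then lean on Kharitonov's existence--uniqueness result (Theorem \ref{exisuni}) for the delay Lyapunov matrix $U(\tau,M)$. The argument breaks into two halves: first existence, then uniqueness, and neither should require any new calculation beyond invoking machinery already set up in the excerpt.

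For existence, I would start by observing that, because the control $u_{L}(x_{t})$ is admissible in the sense of Section~2, the closed-loop system \eqref{sistema_1_lazo_cerrado_u_gammas} is exponentially stable, so its fundamental matrix satisfies the bound \eqref{Ktacotada}. This in turn guarantees absolute convergence of the improper integral \eqref{matriz_U_0_tau_M} that defines $U(\tau,M_{i})$ for each of the three weighting matrices $M_{1}$, $M_{2}(\theta)$, $M_{3}(\theta_{1},\theta_{2})$ built from $Q$, $R$, $\Gamma_{0}$, $\Gamma_{1}(\cdot)$. Since $A_{1}$ is a constant matrix, $G(\cdot)$ is continuous on $[-h,0]$, and all inner integrations in \eqref{Pi_0_con_matrices_Lyapunov}--\eqref{ec_pi_2_explicita} are taken over compact intervals against continuous integrands, the right-hand sides are well-defined, continuous in their arguments, and finite. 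Hence $\Pi_{0}$, $\Pi_{1}(\theta)$ on $[-h,0]$, and $\Pi_{2}(\xi,\theta)$ on $[-h,0]\times[-h,0]$ exist.

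For uniqueness, I would appeal directly to Theorem \ref{exisuni}: once the closed-loop matrices $A_{0}$, $A_{1}$, $G(\cdot)$ and the weighting matrix $M$ are fixed, the delay Lyapunov matrix $U(\cdot,M)$ is the unique solution of the dynamic, symmetry, and algebraic conditions \eqref{dinamica_U}--\eqref{algebraica_U}. Since the matrices $M_{1}$, $M_{2}(\cdot)$, $M_{3}(\cdot,\cdot)$ are themselves uniquely determined by the fixed admissible control \eqref{u_gammas} together with $Q$ and $R$, the Lyapunov matrices $U(\cdot,M_{1})$, $U(\cdot,M_{2}(\theta_{2}))$, $U(\cdot,M_{3}(\theta_{1},\theta_{2}))$ are all unique. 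Substituting into the closed-form expressions \eqref{Pi_0_con_matrices_Lyapunov}, \eqref{ec_55_pi_1}, \eqref{ec_pi_2_explicita} from Proposition \ref{escrituramatlyap} then yields uniquely determined values for $\Pi_{0}$, $\Pi_{1}(\theta)$, and $\Pi_{2}(\xi,\theta)$.

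I do not foresee a genuine obstacle: the content of the corollary is almost a restatement of Kharitonov's theorem transported through the linear map given by Proposition \ref{escrituramatlyap}. The only point worth being careful about is to make sure that the $M_{i}$ are unambiguously fixed by the data of the problem before quoting Theorem \ref{exisuni}; this is why I would explicitly write them in terms of $Q$, $R$, $\Gamma_{0}$, $\Gamma_{1}(\cdot)$ before concluding. The proof should therefore be short, essentially two displayed sentences citing Theorem \ref{exisuni} and Proposition \ref{escrituramatlyap}.
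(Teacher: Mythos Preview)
Your proposal is correct and follows exactly the same route as the paper: the paper's proof is a one-line invocation of the expressions in Proposition~\ref{escrituramatlyap} together with Theorem~\ref{exisuni} on existence and uniqueness of the delay Lyapunov matrix for stable systems. Your version simply spells out the two halves (existence via exponential stability and compact-domain integration, uniqueness via the $M_i$ being fixed by the data) with more care than the paper does.
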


\begin{proof}[Proof of Corollary \ref{unicidad_pi_s}]
The result follows immediately from the expressions of these matrices in Proposition \ref{escrituramatlyap} and from Theorem \ref{exisuni} which establishes the  existence and uniqueness of the delay Lyapunov matrix of stable systems. $\square$
\end{proof}

Having fully established the form of the Bellman functional, it is now possible to follow the steps of Ross, namely to compute the explicit time derivative of the Bellman functional along the trajectories of system \eqref{sistema1}, to replace it into the Bellman equation \eqref{inciso_a} to find  necessary optimality conditions. The structure of the optimal control and the five equations 1)-5) in Theorem \ref{teorema_2_2} are obtained. According to Theorem \ref{teorema_2_1} given in \cite{Ross_1969}, these conditions are sufficient as well.

\begin{remark}
In our previous work a sub-optimal control for time-delay systems was presented \cite{Santos_2009}. There, an approximation of the Bellman functional was obtained via the prescribed derivative functional approach and the delay Lyapunov matrix definition. A surprising fact was that the thirteen summands obtained approximation seemed much more complex than the three terms Bellman functional \eqref{funcional_v_phi_11_x_t}. Proposition \ref{escrituramatlyap} allows to establish via Fubini's theorem \cite{Thomas}, that both functionals have in fact the same form \cite{ortega2020}.
 It is worthy of mention that this equivalence was validated by  numerical verification on an example presented in \cite{Ross71}, \cite{Santos_2009}.
\end{remark}

\section{Lower and upper bounds for the Bellman functional}

We establish in this section that the Bellman functional admits a quadratic local upper bound and a cubic local lower bound. Our proof is inspired in \cite{Wenzhang_1989} where such bounds are given for Lyapunov-Krasovskii functionals with prescribed derivative in the case of pointwise and distributed delay linear systems.

\begin{proposition} \label{prop_cotas}
	Given the stable system \eqref{sistema_1_lazo_cerrado_u_gammas}, the Bellman functional \eqref{funcional_v_phi_11} admits an upper quadratic bound and a lower local cubic bound.
\end{proposition}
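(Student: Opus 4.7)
The plan is to treat the upper and lower bounds separately, since they rely on rather different properties of the functional. For the upper bound I would work directly with the quadratic form \eqref{funcional_v_phi_11} written in the initial condition; for the lower bound I would instead use the alternative representation $V(\varphi)=J(\varphi,u_L)$ coming from \eqref{Vpositiva}, which turns the problem into a short-time estimate on the state trajectory.

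The upper bound is essentially routine. By Proposition \ref{escrituramatlyap}, each of $\Pi_0$, $\Pi_1(\theta)$ and $\Pi_2(\xi,\theta)$ is obtained as an integral of the delay Lyapunov matrix $U(\cdot,M)$ over a compact set, and since $U(\tau,M)$ is continuous, the matrices $\Pi_1$ and $\Pi_2$ are continuous on $[-h,0]$ and $[-h,0]\times[-h,0]$ respectively, hence bounded in norm. Setting $p_0:=\|\Pi_0\|$, $p_1:=\max_{\theta}\|\Pi_1(\theta)\|$, $p_2:=\max_{\xi,\theta}\|\Pi_2(\xi,\theta)\|$ and applying the submultiplicative norm inequality term by term in \eqref{funcional_v_phi_11} gives
\begin{equation*}
V(\varphi)\leq p_0\|\varphi(0)\|^2+2hp_1\|\varphi(0)\|\|\varphi\|_h+h^2p_2\|\varphi\|_h^2\leq \bigl(p_0+2hp_1+h^2p_2\bigr)\|\varphi\|_h^2,
\end{equation*}
which is the quadratic upper bound.

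For the lower bound I would start from $V(\varphi)=J(\varphi,u_L)\geq \lambda_{\min}(Q)\int_{0}^{\sigma}\|x(t,\varphi)\|^{2}dt$, valid for every $\sigma>0$. The trajectory of the closed-loop system \eqref{sistema_1_lazo_cerrado_u_gammas} satisfies, on a fixed window $t\in[0,h]$, an estimate of the form $\|\dot{x}(t)\|\leq C\|\varphi\|_h$ with $C$ depending on $\|A_0\|$, $\|A_1\|$ and $\int_{-h}^{0}\|G(\theta)\|\,d\theta$, so that
\begin{equation*}
\|x(t,\varphi)\|\geq \|\varphi(0)\|-Ct\|\varphi\|_h, \qquad t\in[0,h].
\end{equation*}
Squaring, using $2ab\leq \varepsilon a^{2}+\varepsilon^{-1}b^{2}$ with $\varepsilon=\|\varphi\|_h$, and integrating over a window of length $\sigma\in(0,h]$ chosen so that the positive quadratic term is retained, yields a local bound of the shape
\begin{equation*}
V(\varphi)\geq \alpha_1\|\varphi(0)\|^{2}-\alpha_2\|\varphi\|_h^{3},
\end{equation*}
which is the advertised cubic lower bound in the spirit of \cite{Wenzhang_1989}. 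Locally (for $\|\varphi\|_h$ small) the quadratic part dominates the cubic correction, and together with the upper bound this delivers the positive definiteness announced in the introduction.

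The main obstacle is the lower bound, and more specifically the balancing step: if one integrates naively one gets a purely quadratic term $-\alpha_2\|\varphi\|_h^{2}$ (too weak to give a local sign) or a singular term like $\|\varphi(0)\|^{3}/\|\varphi\|_h$ (not directly comparable to $\|\varphi(0)\|^{2}$). Choosing the truncation time $\sigma$ and the Young parameter $\varepsilon$ so that the mixed cross term $\|\varphi(0)\|\|\varphi\|_h$ is absorbed as $\|\varphi\|_h^{3}$ is what makes the Wenzhang-type cubic bound come out, and this is the only step that requires genuine care; the rest of the argument is bookkeeping using the Lipschitz structure of \eqref{sistema_1_lazo_cerrado_u_gammas} and the already established representation of $\Pi_0,\Pi_1,\Pi_2$.
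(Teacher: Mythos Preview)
Your treatment of the upper bound is essentially the paper's: both majorize each term of \eqref{funcional_v_phi_11} by $\|\varphi\|_h$ and collect the constants. That part is fine.

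The lower bound, however, has a genuine gap. You aim for $V(\varphi)\geq \alpha_1\|\varphi(0)\|^{2}-\alpha_2\|\varphi\|_h^{3}$, but the Young step you describe cannot produce the cubic term. From $\|x(t)\|^{2}\geq \|\varphi(0)\|^{2}-2Ct\,\|\varphi(0)\|\,\|\varphi\|_h$ and integrating over a fixed window $[0,\sigma]$, the cross term is $C\sigma^{2}\|\varphi(0)\|\,\|\varphi\|_h$. No choice of $\varepsilon$ in $2ab\leq\varepsilon a^{2}+\varepsilon^{-1}b^{2}$---including $\varepsilon=\|\varphi\|_h$---turns this into $\alpha_2\|\varphi\|_h^{3}$ without introducing a singular or wrongly-scaled companion term; at best you land at $\alpha_1\|\varphi(0)\|^{2}-\alpha_2\|\varphi\|_h^{2}$, which is useless. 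Moreover, even if the bound you state held, your inference of positive definiteness is incorrect: the ``quadratic part'' is in $\|\varphi(0)\|$ and the ``cubic correction'' is in $\|\varphi\|_h$, so for $\varphi$ with $\varphi(0)=0$ and $\|\varphi\|_h>0$ the bound is strictly negative. Positive definiteness in the Lyapunov--Krasovskii sense requires $V(\varphi)\geq w(\|\varphi(0)\|)$ for a class-$\mathcal{K}$ function $w$; your form does not deliver that.

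What the paper actually does---and what you are missing---is to make the integration window \emph{depend on $\|\varphi(0)\|$}, not on a Young parameter. One first restricts to $\|\varphi\|_h\leq\alpha$ and uses Gronwall to bound $\|x_t^*\|_h$ uniformly on a fixed interval $[0,t^{*}]$; this gives $\|\dot{x}^*(t)\|\leq \bar{N}$ for a constant $\bar N$ depending only on $\alpha$, and hence $\|x^*(t)\|\geq\|\varphi(0)\|-\bar{N}t$. Choosing $\delta=\|\varphi(0)\|/(2\bar{N})$ ensures $\|x^*(t)\|\geq\tfrac{1}{2}\|\varphi(0)\|$ on $[0,\delta]$, and integrating $\dot V\leq -x^{*T}Qx^*$ over $[0,\delta]$ yields directly
\[
V(\varphi)\geq V(x_\delta^*)+\lambda_{\min}(Q)\int_0^{\delta}\|x^*(t)\|^2\,dt\geq \frac{\lambda_{\min}(Q)}{8\bar{N}}\,\|\varphi(0)\|^{3},
\]
a genuinely positive local cubic lower bound in $\|\varphi(0)\|$ alone. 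The $\varphi$-dependent window is the mechanism that converts the quadratic integrand into a cubic bound; no Young inequality is involved.
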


\begin{proof}[Proof of Proposition \ref{prop_cotas}]
 Given the functional \eqref{funcional_v_phi_11}, using the fact that $||\varphi (\theta )||\leq ||\varphi ||_{h}$, and appropriate majorizations, we get \begin{equation*}
\begin{split}
&V(\varphi )\leq ||\varphi||_{h}^{2}||\Pi _{0}||+2||\varphi
||^{2}_{h}\int_{-h}^{0}||\Pi _{1}(\theta )||d\theta
+||\varphi||_{h}^{2}\int_{-h}^{0}\int_{-h}^{0}||\Pi _{2}(\xi ,\theta )||d\xi d\theta,    
\end{split}
\end{equation*}
hence,
\begin{equation}
 V(\varphi )\leq C_{1}||\varphi ||_{h}^{2},      \ C_{1}=||\Pi _{0}||+2X_{1}+X_{2}>0,
 \label{C1}
\end{equation} with
\begin{eqnarray*}
	X_{1} =\int_{-h}^{0}||\Pi _{1}(\theta )||d\theta,  X_{2} =\int_{-h}^{0}\int_{-h}^{0}||\Pi _{2}(\xi ,\theta )||d\xi d\theta,
\end{eqnarray*}
and we conclude that the Bellman functional admits an upper quadratic bound. Now, for the lower bound for functional \eqref{funcional_v_phi_11}, consider the stable closed-loop system \eqref{sistema_1_lazo_cerrado_u_gammas}, and notice that functional \eqref{funcional_v_phi_11} satisfies:
\begin{equation}
\frac{dV(x_{t}^{\ast })}{dt}=-x^{\ast T}(t)Qx^{\ast }(t)-u^{\ast T}(t)Ru^{\ast }(t)<0.
\label{derivada_V_x_t_estrella}
\end{equation}

Here $x_{t}^{\ast }$ denotes the optimal trajectory of the closed-loop system with optimal control $u^{\ast}$. Following the ideas of the proof of the main Theorem given in \cite{Wenzhang_1989} and \textbf{\cite{medvedeva2015synthesis}}.
Integrating the closed loop system given by \eqref{sistema_1_lazo_cerrado_u_gammas}, from zero to $t$, we get
\begin{equation*}
x^{\ast }(t)-x^{\ast }(0)=A_0\int_{0}^{t}x^{\ast
}(s)ds+A_1\int_{0}^{t}x^{\ast }(s-h)ds+\int_{0}^{t}\int_{-h}^{0}G(\theta
)x^{\ast }(s+\theta )d\theta ds
\end{equation*}%
hence, for $g=\underset{\theta \in \lbrack -h,0]}{sup}\left\Vert G(\theta
)\right\Vert$, some variable changes, and integral properties, we obtain that:
\begin{eqnarray*}
\left\Vert x^{\ast }(t)\right\Vert  &\leq &\left\Vert \varphi (0)\right\Vert
+\left\Vert A_0\right\Vert \int_{0}^{t}\left\Vert x^{\ast }(s)\right\Vert
ds+\left\Vert A_1\right\Vert \int_{0}^{t}\left\Vert x^{\ast
}(s-h)\right\Vert ds
+g\int_{0}^{t}\int_{-h}^{0}\left\Vert x^{\ast }(s+\theta)\right\Vert d\theta ds  \\
&\leq &(\left\Vert \varphi (0)\right\Vert +(\left\Vert A_1\right\Vert
+gh)\int_{-h}^{0}\left\Vert \varphi (\theta )\right\Vert d\theta
)+(\left\Vert A_0\right\Vert +\left\Vert A_1\right\Vert
+gh)\int_{0}^{t}\left\Vert x^{\ast }(\theta )\right\Vert d\theta
\end{eqnarray*}%
and, by Bellman-Gronwall Lemma \cite{bellman1963differential}, we get
\begin{eqnarray*}
\left\Vert x^{\ast }(t)\right\Vert  &\leq &m_{0}e^{Lt} \\
\text{with }m_{0} &=&\left\Vert \varphi (0)\right\Vert +(\left\Vert
A_1\right\Vert +gh)\int_{-h}^{0}\left\Vert \varphi (\theta )\right\Vert
d\theta  \\
\text{and }L &=&\left\Vert A_0\right\Vert +\left\Vert A_1\right\Vert +gh.
\end{eqnarray*}%
For $\left\Vert \varphi \right\Vert _{h}\leq \alpha ,$we get%
\begin{equation}
\left\Vert x(t^{\ast},\varphi )\right\Vert \leq N(t),\quad N(t)=\alpha
(1+\left\Vert A_1\right\Vert h+gh^{2})e^{Lt},\text{ for }t\geq 0,
\label{ineq1}
\end{equation}%
with $m_{0}\leq \alpha \left( 1+(\left\Vert A_1\right\Vert +gh)h\right) .$
By the inequality given by (\ref{ineq1}), $\left\Vert \varphi \right\Vert
_{h}\leq \alpha <N(t).$ Consequently $\left\Vert x(t,\varphi )\right\Vert
\leq N(t),$ $t\geq -h$.

Therefore, for all $t\in \lbrack 0,t^{\ast }]$ and $\varphi \in C_{\alpha}=\left\{\varphi : \varphi \in PC([-h,0],R^{n}),	\|\varphi\|\leq\alpha\right\}$, we have:
\begin{equation}
\|x^{\ast}(\varphi,t)\| \leq LN(t)\leq LN(t^{\ast }), \quad \text{for} \quad \varphi \in C_{\alpha }, t \in [0,t^{\ast}],
\label{desigualdadCero}
\end{equation}
notice that $N(t^{\ast})>\alpha $.

Taking norms on both sides of   \eqref{sistema_1_lazo_cerrado_u_gammas} and using standards majorizations and inequalities yields
\begin{equation}
||\dot{x}^{\ast }(t)||\leq C_{2}||x_{t}^{\ast }||_{h},\forall t\in
\lbrack 0,\infty \lbrack,
\label{desigualdad_x_punto}
\end{equation}
with \begin{equation}
 C_{2}= ||A_0||+||A_1||+\int_{-h}^{0}||G(\theta )||d\theta >0. 
 \label{C2}
\end{equation}

Now, the
inequality (\ref{desigualdadCero})
implies that $\left\Vert x^{\ast }(t,\varphi )\right\Vert\leq\left\Vert x^{\ast }(t,\varphi )\right\Vert _{h}\leq LN(t^{\ast
})$, ($\varphi (\theta )=0,$ for all $\theta <-h,$ and $\left\Vert \varphi
\right\Vert \leq \alpha <N(t)$) for $t\in \left[ -h,t^{\ast }\right] $. 
By the following inequality  \cite{aman2008analysis}:
\begin{equation*}
\left\Vert \int_{0}^{t}x^{*\prime }(\tau )d\tau \right\Vert \leq
\int_{0}^{t}\left\Vert x^{*\prime }(\tau )\right\Vert d\tau ,
\end{equation*}%
then we have that
\begin{equation*}
\left\Vert x(0)-x^{*}(t)\right\Vert =\left\Vert x^{*}(t)-x(0)\right\Vert \leq
\int_{0}^{t}\left\Vert x^{*\prime }(\tau )\right\Vert d\tau .
\end{equation*}%
Consider the inequality $\left\Vert H\right\Vert -\left\Vert I\right\Vert
\leq \left\Vert H-I\right\Vert $ follows that%
\begin{equation*}
\left\Vert x(0)\right\Vert -\left\Vert x(t)\right\Vert \leq
\int_{0}^{t}\left\Vert x^{\prime }(\tau )\right\Vert d\tau .
\end{equation*}%
Thus, integrating both sides of $\left\Vert \dot{x^{*}}(t,\varphi )\right\Vert
\leq C_2LN(t^{\ast })$ respect to $t,$ from $0$ to $t,$ we can arrive to
\begin{equation*}
\left\Vert \varphi (0)\right\Vert -\left\Vert x^{*}(t,\varphi )\right\Vert \leq
\int_{0}^{t}\left\Vert x^{*\prime }(\tau ,\varphi )\right\Vert d\tau \leq
C_2LN(t^{\ast })\int_{0}^{t}d\tau
\end{equation*}%
and multiplying by $-1$ both sides:
\begin{equation*}
\left\Vert x(t^{*},\varphi )\right\Vert -\left\Vert \varphi (0)\right\Vert \geq
-C_2LN(t^{\ast })t, \text{ }  t\in \lbrack -h,t^{\ast }].
\end{equation*}%
Hence, we can obtain:
\begin{equation}
||x^{*}(\varphi ,t)||\geq ||\varphi (0)||-\bar{N}t, \qquad \forall t\in \left[
0,t^{\ast }\right],
\label{desigualdad_4}
\end{equation}
 where
\begin{equation}
\bar{N}=\max \left\{ C_{2}LN(t^{\ast }),\frac{\alpha }{2t^{\ast }}%
\right\}.
\label{Nbar}
\end{equation}%
For 
\begin{equation}
\delta =\frac{||\varphi (0)||}{2\bar{N}} \leq t^{\ast} 
\label{delta}
\end {equation}
it follows \eqref{desigualdad_4} that
\begin{equation}
||x^{*}(\varphi ,t)||\geq ||\varphi (0)||-\bar{N}\delta. \label{desigualdad_5}
\end{equation}
therefore $\forall t\in \lbrack 0,\delta]$,
\begin{equation}
 ||x^{*}(\varphi ,t)||\geq ||\varphi (0)||-\bar{N}\frac{||\varphi (0)||}{2\bar{N}}%
= \frac{||\varphi (0)||}{2}>0.\\
\label{desigualdad_x_phi_medios}
\end{equation}
We can see from \eqref{derivada_V_x_t_estrella} that
\begin{equation}
\dot{V}(x_{t}^{\ast })|_{(\ref{sistema1})}\leq -x^{\ast T}(t)Qx^{\ast }(t),
\label{desigualdad_9}
\end{equation}
and $\delta <t^{\ast }$. Integrating both sides of (\ref{desigualdad_9}) gives
\begin{equation}
\int_{0}^{\delta }\frac{d}{dt}V(x_{t}^{\ast })dt\leq -\int_{0}^{\delta
}x^{\ast T}(t)Qx^{\ast }(t)dt.  \label{desigualdad_10}
\end{equation}

Solving the integral on the right-hand side, and using Rayleigh inequality together with (\ref{desigualdad_x_phi_medios}, \ref{delta}) on the left-hand side  gives
\begin{equation*}
V(x_{\delta }^{\ast })-V(\varphi (0))\leq -\frac{\lambda_{min}(Q)}{8\bar{N}}%
||\varphi (0)||^{3}=-u_{\alpha }(||\varphi (0)||).
\end{equation*}

As \eqref{Vpositiva} implies that $V(x^{*}_\delta)>0$, we get
\begin{equation*}
u_{\alpha }(||\varphi (0)||)\leq V(\varphi (0)).
\end{equation*}
Observe that $u_{\alpha }(||\varphi (0)||)$ is monotone increasing and $u_{\alpha }(0) = 0$. 
Clearly, this local cubic lower bound depends on the instantaneous state.  $\square$
\end{proof}

\begin{remark}
The above proposition proves that the Bellman functional is positive definite. Additionally, by Theorem \ref{teorema_2_1} in \cite{hale1993}, we conclude that the optimal solution of system \eqref{sistema_1_lazo_cerrado_u_gammas} is asymptotically uniform stable, a result that was not formally proved in \cite{Ross_1969}.
\end{remark}

\section{Numerical example}
In this section, we present the bounds for the Bellman functional for the optimal control introduced in \cite{Ross71}. The performance index (\ref{desempenioJ}) is such that $Q=\text{diag}\{1,10,1,100\}$, $R=\text{diag}\{1,1\}$, the system is  of the form (\ref{sistema1}) with:
\[
A=\left[
\begin{array}{cccc}
-4.93 & -1.01 & 0 & 0 \\
-3.2 & -5.3 & -12.8 & 0 \\
6.4 & 0.347 & -32.5 & -1.04 \\
0 & 0.833 & 11 & -3.96%
\end{array}%
\right]
\]%
\[
B=\left[
\begin{array}{cccc}
1.92 & 0 & 0 & 0 \\
0 & 1.92 & -12.8 & 0 \\
6.4 & 0.347 & -32.5 & -1.04 \\
0 & 0.833 & 11 & -3.96%
\end{array}%
\right], 
D=\left[
\begin{array}{cc}
1 & 0 \\
0 & 1 \\
0 & 0 \\
0 & 0%
\end{array}%
\right],
\]
and the initial condition is: $\varphi^{T}(\theta)=[\begin{array}{cccc}
0.1 & 0 & 0 & 0 \end{array}] $, $\theta \in [-1,0]$. For experimental results on this control law, the reader is referred to \cite{Lopez_2018}.

To determine the lower bound, we first compute the fundamental matrix of the closed-loop system (\ref{sistema_1_lazo_cerrado_u_gammas}), via the command 'dde23' in MATLAB, see  Figure \ref{Kmatrix}.

\begin{figure}[h]
  \centering
  \includegraphics[width=200pt]{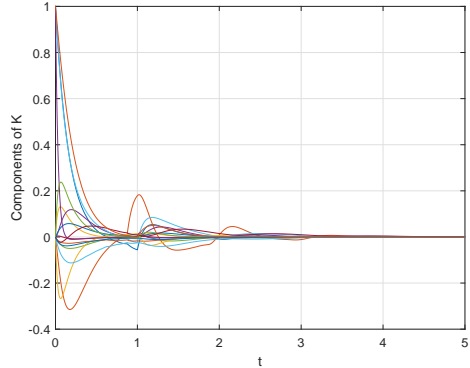}
  \centering
  \caption{Entries of the matrix $K(t)$}
  \label{Kmatrix}
\end{figure}

The value of $t^{\ast }$ is fixed to 1 because all the entries of matrix $%
K(t)$ start to converge to zero after this time. Equation \eqref{C2} gives $C_{2}=40.3438$.
Now, as $\left\Vert \varphi \right\Vert =0.1$, $g=3.0393$, $\left\Vert A_{1}\right\Vert
=1.92$, then $m_0=0.5959$ and $\alpha =0.1$, which satisfies $m_{0}\leq \alpha \left(1+(\left\Vert A_1\right\Vert +gh)h\right)$. With $L=41.9333$, according to \eqref{ineq1}, the function $N(t)$ is:  \begin{equation*}
N(t) =0.5959e^{41.9333t}
\end{equation*}%
Then $N(1)=9.6961\times10^{17}$, it follows from \eqref{Nbar} that $\bar{N}=\max \left\{ 1.6403\times 10^{21};0.05\right\} =1.6403\times 10^{21},$ and that  %
\begin{equation*}
\delta =\frac{\left\Vert \varphi \left( 0\right) \right\Vert }{2\bar{N}}%
=3.0482\times10^{-23},
\end{equation*}%
with $\delta=3.0482\times10^{-23} <1=t^{\ast }.$
As $\lambda_{min}(Q) =1,$
\begin{equation*}
u_{\alpha }\left( \left\Vert x(t)\right\Vert \right)=7.6206\times10^{-23}\left\Vert x(t)\right\Vert
^{3}.
\end{equation*}%

According to \eqref{C1}, the upper bound is%
\begin{equation*}
V(\varphi )\leq 22.574\left\Vert \varphi \right\Vert
_{h}^{2}.
\end{equation*}

\section{Experimental results}
The experimental platform used here is similar to the one used in \cite{Ortega2021}. This dispositive emulates a real atmospheric dehydrator. It has a drying section with a wind tunnel as output and a pipe that recycles the hot air into the system and induces a state delay in the mathematical model. The dehydrator includes of : a temperature sensor $LM35$ with a measurement rate of $10 mV/^{o}C$; a fan
producing a constant air flow with velocity of $2.1 m/s$; an electrical grid (actuator) as heat source; a control voltage in the range $0 - 120 Vrms$ of AC power, which regulates the temperature inside the chamber. This temperature plant has the following linear model (in a specific operation region, see \cite{Lopez_2018}) :
\begin{equation}
\begin{split}
&\dot{x}(t)=a_{0}x(t)+a_{1}x(t-h)+bu(t),\\
\end{split}
\label{sistemaEx}
\end{equation}
where $x(t)$ represents the temperature (process variable) of the drying wind, $\varphi=17^{o}C$ is the initial temperature of the system and the input $u(t)$ is the voltage applied to the grid. 
A least mean square recursive algorithm was used in order to estimate the parameters of the model given by \eqref{sistemaEx}.  The estimated model parameters converge to $a_{0}=-0.046502$, $a_{1}=0.044844$ and $b=0.000143$. The delay $h=4$ seconds is heuristically estimated by comparison of different measurements in the recirculating tube. The estimation error converges to $0.000043153$. Figure \ref{parametros_planta} shows the parameters behaviour along \textbf{with} the identification process.
\begin{figure}[h]
\centering
\subfigure{\includegraphics[scale=0.3]{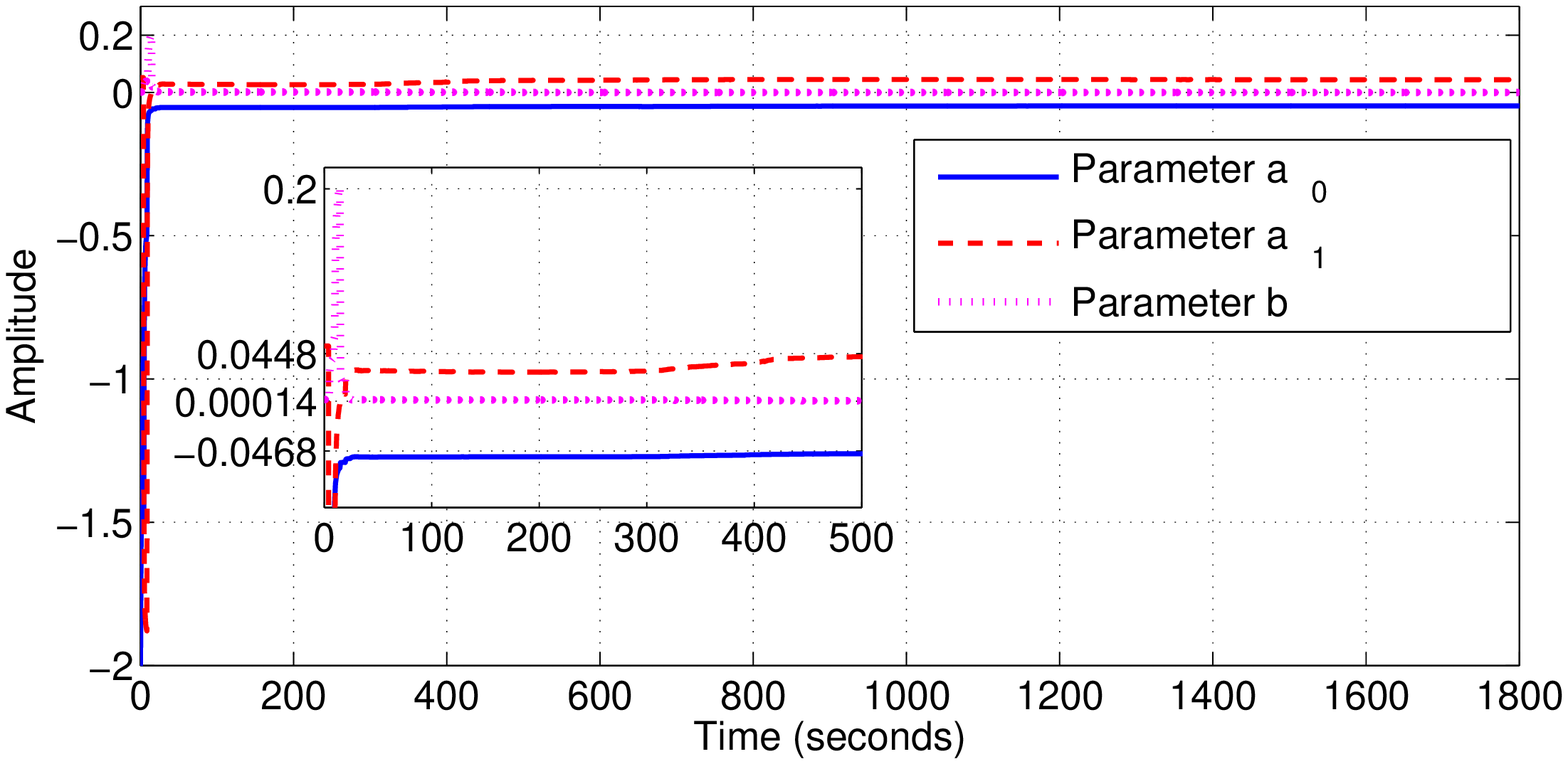}}
\subfigure{\includegraphics[scale=0.3]{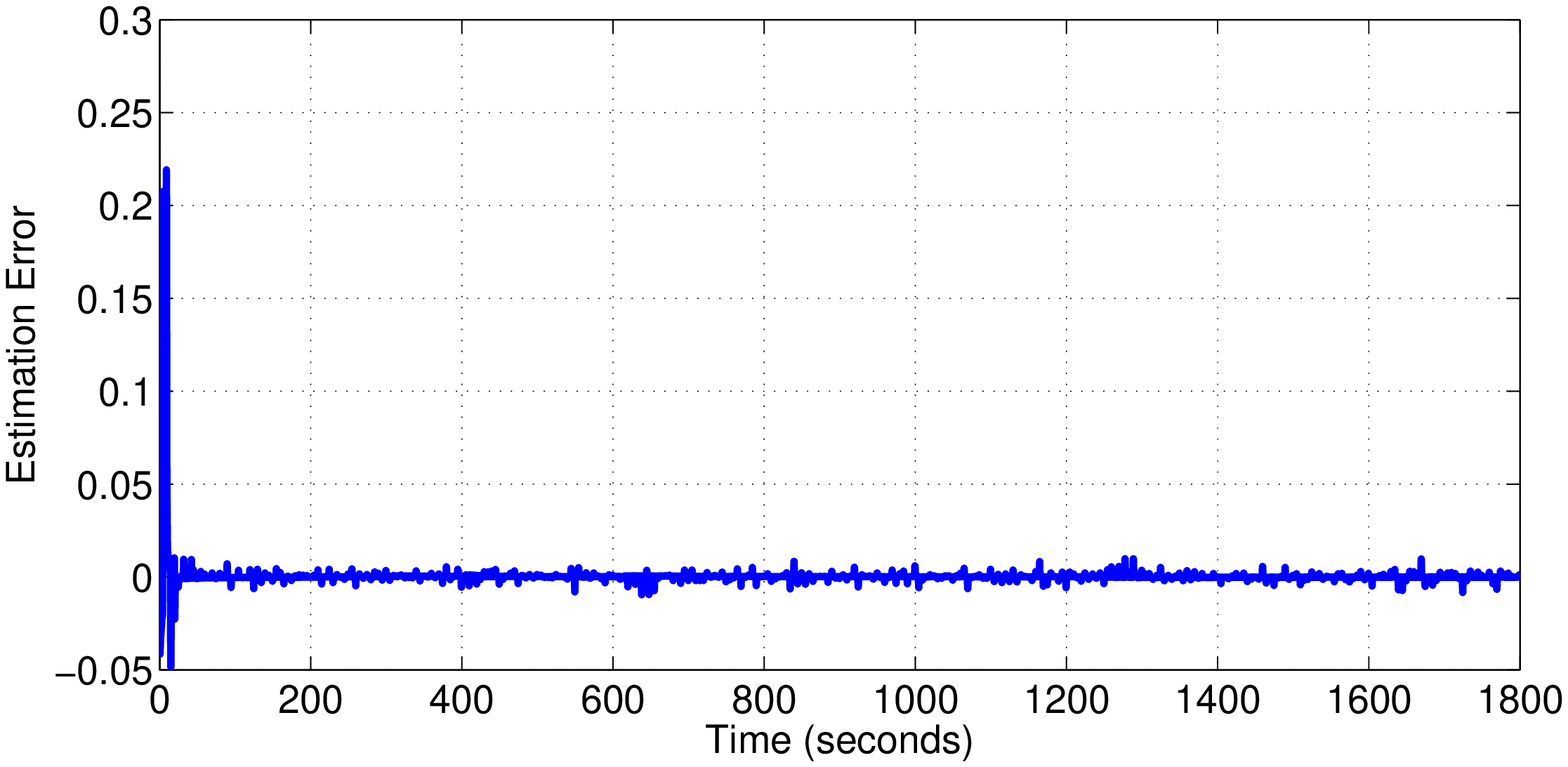}}
\caption{Numerical behaviour of estimated parameters for the linear delay model and estimation error
when a set point fixed to $25^{o}C$ is considered}
\label{parametros_planta}
\end{figure}

In contrast to previous experimental results reported in the specialized literature, see \cite{Lopez_2018}, \cite{rodriguez2019robust} and \cite{Ortega2021}, a trajectory tracking of the process variable is presented here. In fact, let the following piece-wise function
\begin{equation}
	r\left( t\right) =\left\{ 
	\begin{array}{cc}
	\frac{t}{10}+r_{01}, & 0s\leq t<40s, \\ 
	r_{1}, & 40s\leq t<600s, \\ 
	r_{1}-\left( \frac{t-600}{10}\right),  & 600s\leq t<640s, \\ 
	r_{0}, & 640s\leq t<1240s, \\ 
	\left( \frac{t-1240}{10}\right) +r_{02}, & 1240s\leq t<1280s, \\ 
	r_{1}, & 1280s\leq t<1800s,%
	\end{array}%
	\right.
	\label{rampa}
\end{equation}
where $r_{01}$ is the measured initial condition  $17^{o} C$, $r_{02}$ is $18.5^{o} C$ and $r_{1}$, the maximum temperature in the experiment is $25^{o} C$. The aim is the trajectory tracking given by equation \eqref{rampa}. Two control strategies are considered: optimal control given by \eqref{ley_control_u0} and a PI control optimally tuned with the method proposed in \cite{he2000pi}. The optimal control for time delay systems is programmed on a MyRIO-National Instruments target which uses LabVIEW software and sampling time of 500 milliseconds. For the optimal PI control implemented with an industrial Honeywell DC1040 controller, the mathematical model is assumed to be given by
 \begin{equation}
G(s)=\frac{Y(s)}{U(s)}= \frac{\mathcal{K}e^{-\tau s}}{\mathcal{T}s+1},
\label{scalar_modelPI}
\end{equation}%
with parameters $\mathcal{K}=0.01455$, $\mathcal{T}=150$ seconds and $\tau$ = 3 seconds obtained from the step response  (120 VCA rms, applied to the grid, representing 100$\%$ of the control signal). For a quadratic performance index with $Q$=diag$\{$15,15$\}$ and $R=1$, the resulting Optimal PI, has gains $K_{p}=79.51$ and $K_{i}=3.873$, the time delay in the input was compensated by the Dead-Band Time instruction available in the DC 1000 series digital PID Honeywell controllers. The temperature, control and  error signals of both controllers, are depicted in Figure \ref{comparaciones}.

\begin{figure}[h]
\centering
\subfigure{\includegraphics[scale=0.3]{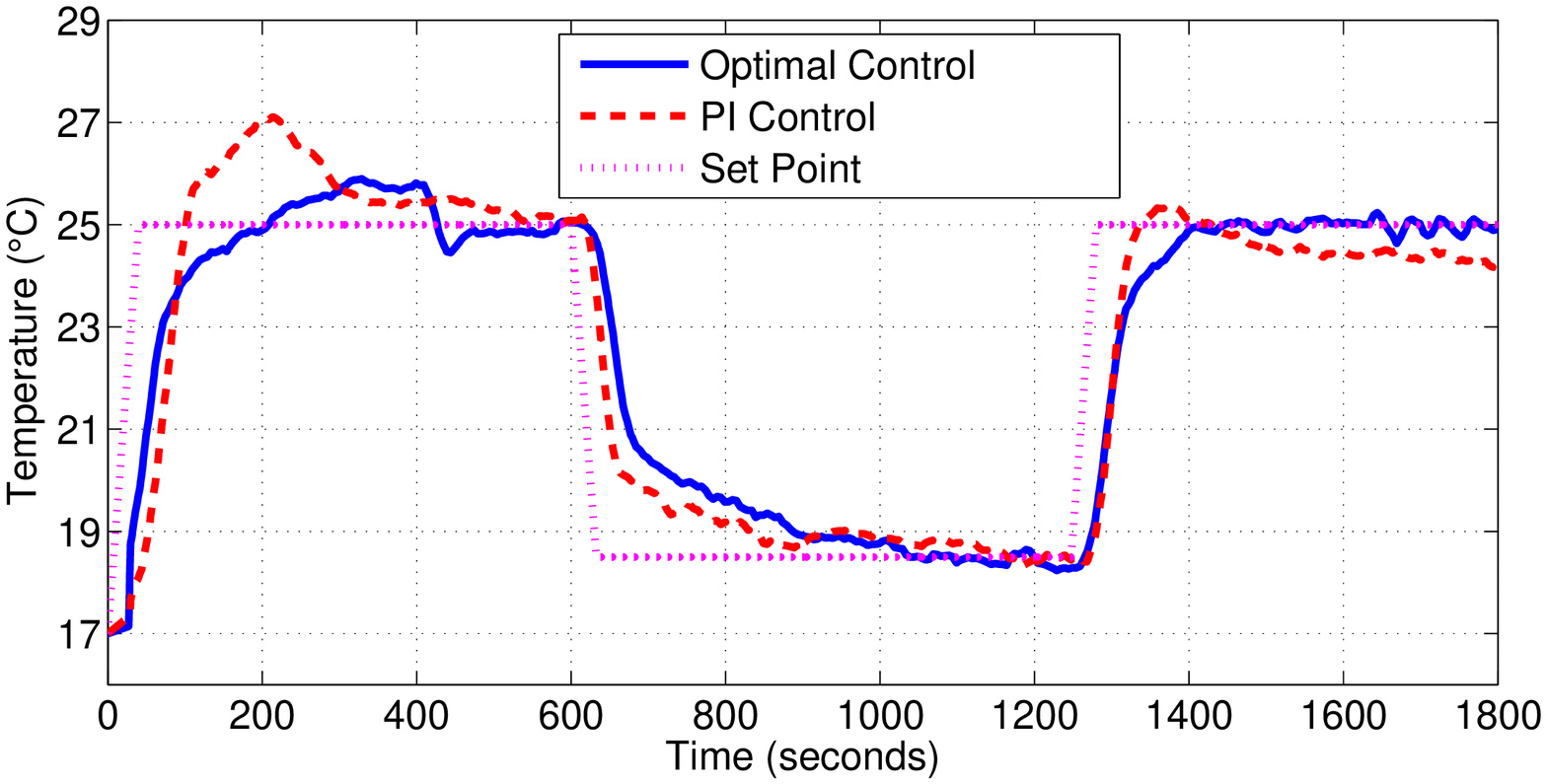}}
\subfigure{\includegraphics[scale=0.3]{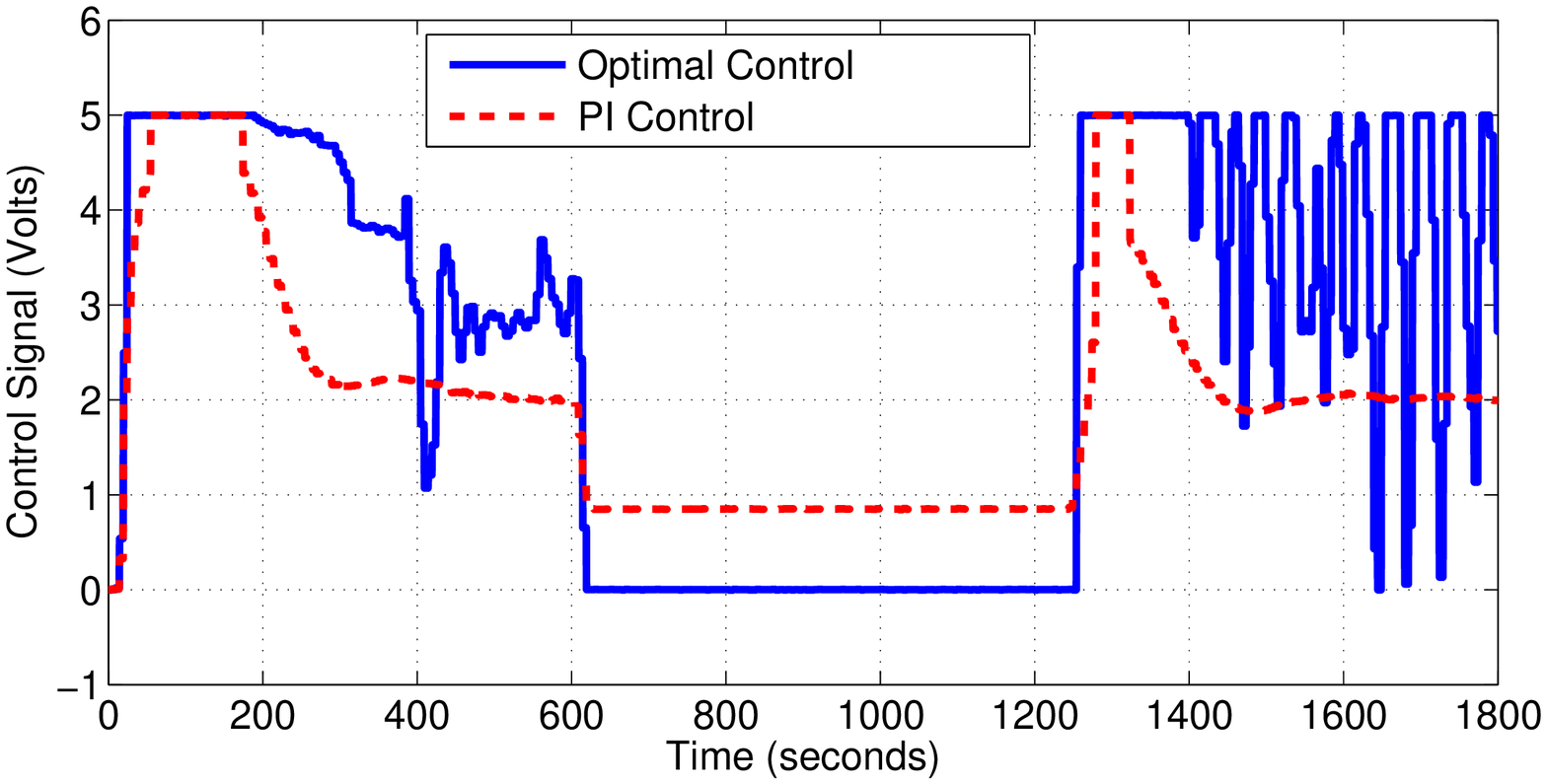}}
\subfigure{\includegraphics[scale=0.3]{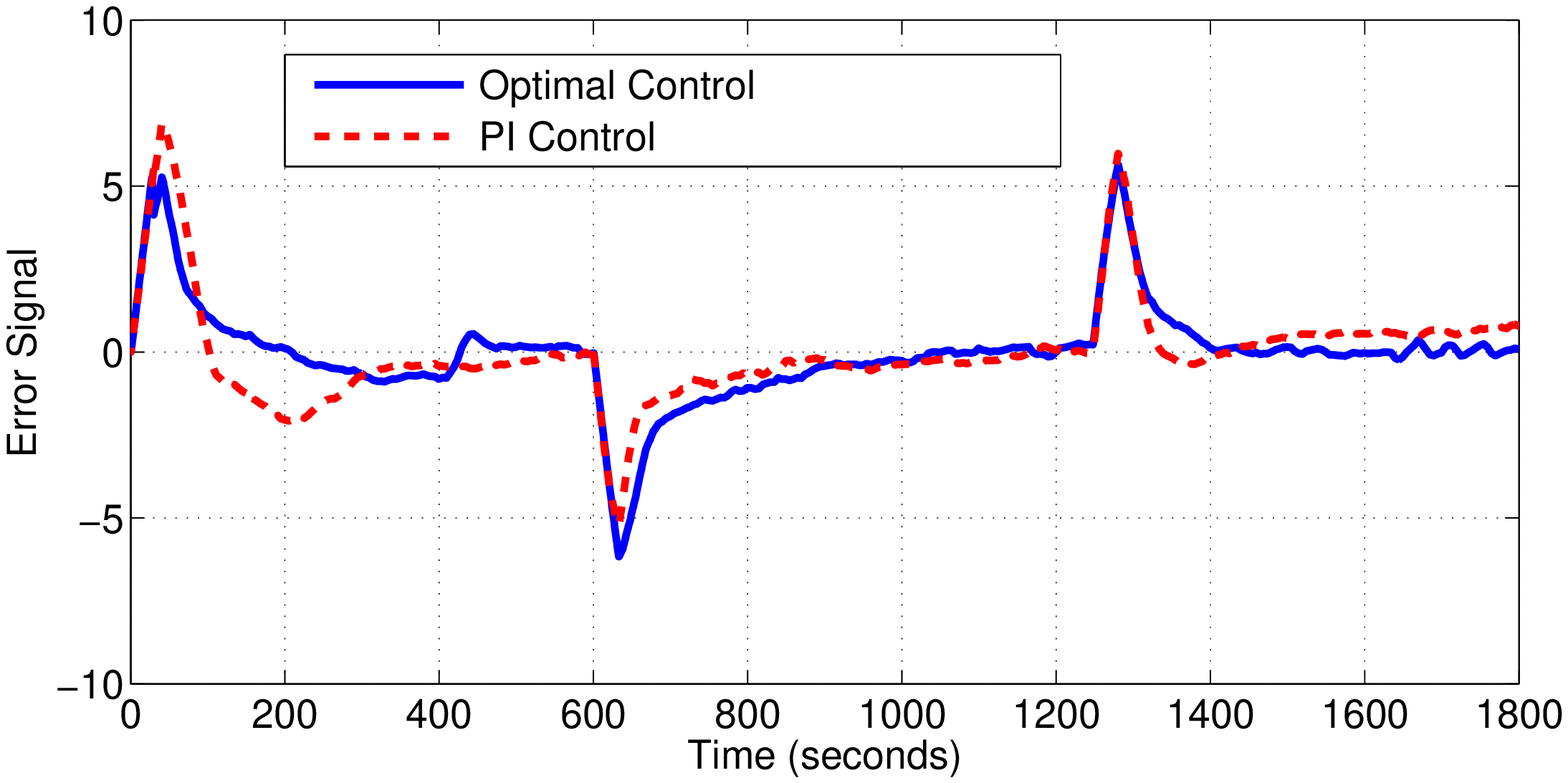}}
\caption{Temperature response, control and error signals with SP (Set Point) adjusted to the trajectory tracking $r(t)$ using both controllers}
\label{comparaciones}
\end{figure}

A better behaviour of the Process Variable without overshot is obtained when the optimal controller given by \eqref{ley_control_u0} is implemented. Indeed, the experimental evidence shown in Table \ref{modelos_continuo} supports this claim.

\begin{table}[h]
\begin{center}
\begin{tabular}{||c|c|c||}
\hline\hline
\textbf{Control strategy} & \textbf{IAE-Optimal Control} &  \textbf{Energy consumption (Wh)} \\ \hline
Optimal control  & 1458.9 & 21.18   \\ \hline
Optimal PI control & 1683.13 & 26.07 \\  \hline\hline
\end{tabular}%
\end{center}
\caption{Comparative table of numerical values for the IAE and energy consumption for the Optimal Control and Optimal PI control}
\label{modelos_continuo}
\end{table}

\section{Conclusions}

The form of the Bellman functional and properties that are the starting point of the optimal control problem of linear distributed time-delay systems are formally justified. Moreover, the solution to the infinite horizon optimal control problem for distributed time delay systems is presented. The expression of the Bellman functional in terms of the delay Lyapunov matrix, allows proving its existence and uniqueness.  We also show that the Bellman functional admits a quadratic upper bound and a local cubic lower bound, which implies that the Bellman functional is positive definite. Some experimental results give evidence of the effectiveness of the optimal control on trajectory tracking tests.

The strategy we employ in this paper will be used in future research to present the Bellman functional for the optimal control problem of other classes of delay systems, in particular those of neutral type. 

\textbf{\textit{Acknowledgements}} 
This work was supported by Conacyt-Mexico Projects: 239371, Conacyt A1-S-24796, SEP-Cinvestav 155.

\bibliographystyle{unsrt}  
\bibliography{references}  

\begin{thebibliography}{10}

\bibitem{Krasovskii_1962}
N.~N Krasovskii.
\newblock On the analytic construction of an optimal control in a system with
  time lags.
\newblock {\em Journal of Applied Mathematics and Mechanics}, 26(1):50--67,
  1962.

\bibitem{Ross_1969}
D~W Ross and I~Fl{\"u}gge-Lotz.
\newblock An optimal control problem for systems with differential-difference
  equation dynamics.
\newblock {\em SIAM Journal on Control}, 7(4):609--623, 1969.

\bibitem{Lopez_2018}
H{\'e}ctor~Aristeo L{\'o}pez-Labra, Omar~Jacobo Santos-S{\'a}nchez, Liliam
  Rodr{\'\i}guez-Guerrero, Jes{\'u}s~Patricio Ordaz-Oliver, and Carlos
  Cuvas-Castillo.
\newblock Experimental results of optimal and robust control for uncertain
  linear time-delay systems.
\newblock {\em Journal of Optimization Theory and Applications},
  181(3):1076--1089, 2019.

\bibitem{Kushner_1970}
Harold~J Kushner and Daniel~I Barnea.
\newblock On the control of a linear functional-differential equation with
  quadratic cost.
\newblock {\em SIAM Journal on Control}, 8(2):257--272, 1970.

\bibitem{Santos_2009}
Omar Santos, Sabine Mondi{\'e}, and Vladimir~L Kharitonov.
\newblock Linear quadratic suboptimal control for time delays systems.
\newblock {\em International Journal of Control}, 82(1):147--154, 2009.

\bibitem{Santos_2006}
O.J. Santos~S\'anchez.
\newblock {\em {Control sub\'optimo para sistemas con retardos: un enfoque
  iterativo }}.
\newblock Ph.{D}. thesis, Centro de Investigaci\'on y Estudios Avanzados del
  Instituto Polit\'ecnico Nacional., 2006.

\bibitem{kim2000linear}
AV~Kim and AB~Lozhnikov.
\newblock A linear-quadratic control problem for state-delay systems. exact
  solutions for the riccati equations.
\newblock {\em Automation and Remote Control}, 61(7 PART 1):1076--1090, 2000.

\bibitem{Ortega2021}
Jorge~Manuel Ortega-Mart\'inez, Omar~Jacobo Santos-S\'anchez, and Sabine
  Mondi\'e.
\newblock Comments on the bellman functional for linear time-delay systems.
\newblock {\em Optimal Control Applications and Methods}, 2021.

\bibitem{KharitonovZhabko_2003}
Vladimir~L Kharitonov and Alexey~P Zhabko.
\newblock Lyapunov--{K}rasovskii approach to the robust stability analysis of
  time-delay systems.
\newblock {\em Automatica}, 39(1):15--20, 2003.

\bibitem{Wenzhang_1989}
Huang Wenzhang.
\newblock Generalization of {L}iapunov's theorem in a linear delay system.
\newblock {\em Journal of Mathematical Analysis and Applications},
  142(1):83--94, 1989.

\bibitem{rodriguez2019robust}
Liliam Rodr{\'\i}guez-Guerrero, Carlos Cuvas-Castillo, Omar-Jacobo
  Santos-S{\'a}nchez, Jes{\'u}s-Patricio Ordaz-Oliver, and C{\'e}sar-Arturo
  Garc{\'\i}a-Samperio.
\newblock Robust guaranteed cost control for a class of perturbed systems with
  multiple distributed time delays.
\newblock {\em Journal of Process Control}, 80:127--142, 2019.

\bibitem{Kharitonov_2006}
Vladimir~L Kharitonov.
\newblock Lyapunov matrices for a class of time delay systems.
\newblock {\em Systems \& Control Letters}, 55(7):610--617, 2006.

\bibitem{Kolmanovskii_1992}
Vladimir Kolmanovskii and Anatolii Myshkis.
\newblock {\em Applied theory of functional differential equations}, volume~85.
\newblock Springer Science \& Business Media, 2012.

\bibitem{kirk2004optimal}
Donald~E Kirk.
\newblock {\em Optimal control theory: an introduction}.
\newblock Courier Corporation, 2004.

\bibitem{bernstein2016existence}
Dorothy~L Bernstein.
\newblock {\em Existence Theorems in Partial Differential Equations.(AM-23),
  Volume 23}.
\newblock Princeton University Press, 2016.

\bibitem{hughes1977well}
Thomas~JR Hughes, Tosio Kato, and Jerrold~E Marsden.
\newblock Well-posed quasi-linear second-order hyperbolic systems with
  applications to nonlinear elastodynamics and general relativity.
\newblock {\em Archive for Rational Mechanics and Analysis}, 63(3):273--294,
  1977.

\bibitem{Thomas}
G.~B. Thomas and R~Finney.
\newblock {\em Calculus and analytic geometry}.
\newblock Addison-Wesley, 1996.

\bibitem{ortega2020}
Jorge Ortega-Mart{\'\i}nez, Omar Santos-S{\'a}nchez, and Sabine Mondi{\'e}.
\newblock Lyapunov-krasovskii prescribed derivative and the bellman functional
  for time-delay systems.
\newblock {\em IFAC-PapersOnLine}, 53(2):7160--7165, 2020.

\bibitem{Ross71}
D~W Ross.
\newblock Controller design for time lag systems via a quadratic criterion.
\newblock {\em IEEE Transactions on Automatic Control}, 16(6):664--672, 1971.

\bibitem{medvedeva2015synthesis}
Irina~V Medvedeva and Alexey~P Zhabko.
\newblock Synthesis of razumikhin and lyapunov--krasovskii approaches to
  stability analysis of time-delay systems.
\newblock {\em Automatica}, 51:372--377, 2015.

\bibitem{bellman1963differential}
RICHARD Bellman and KENNETH~L Cooke.
\newblock Differential-difference equations, acad.
\newblock {\em Press, NY}, 1963.

\bibitem{aman2008analysis}
Herbert Aman and Joachim Escher.
\newblock Analysis ii.
\newblock {\em S. Levy, and M. Cargo, Trads. Berlín, Birkh{\"a}user}, 2008.

\bibitem{hale1993}
Jack~K Hale and Sjoerd M~Verduyn Lunel.
\newblock {\em Introduction to functional differential equations}, volume~99.
\newblock Springer Science \& Business Media, 2013.

\bibitem{he2000pi}
Jian-Bo He, Qing-Guo Wang, and Tong-Heng Lee.
\newblock Pi/pid controller tuning via lqr approach.
\newblock {\em Chemical Engineering Science}, 55(13):2429--2439, 2000.

\end{thebibliography}

\end{document}